\newtheorem{theorem}{Theorem}[section]
\newtheorem{lemma}{Lemma}[section]
\newtheorem{proposition}{Proposition}[section]
\theoremstyle{definition}
\newtheorem{definition}{Definition}[section]
\newtheorem{remark}{Remark}[section]
\newtheorem{example}{Example}[section]
\newcommand{\R}{{\mathbb R}}
\newcommand{\RR}{{\mathbb R}}
\newcommand{\N}{{\mathbb N}}
 \def\1{\raisebox{2pt}{\rm{$\chi$}}}
\def\undertilde#1{\mathord{\vtop{\ialign{##\crcr
$\hfil\displaystyle{#1}\hfil$\crcr\noalign{\kern1.5pt\nointerlineskip}
$\hfil\tilde{}\hfil$\crcr\noalign{\kern1.5pt}}}}}
\DeclareMathAlphabet{\mathpzc}{OT1}{pzc}{m}{it}
\newcommand{\dist}{{\mathpzc{d}}}
\title{
Fractional convexity
}
\author[L. M. Del Pezzo, A. Quaas and J.D. Rossi]{Leandro M. Del Pezzo$^1$, Alexander Quaas$^2$ and Julio D. Rossi$^1$}
\begin{document}

\maketitle

         \centerline{$^1$Departamento  de Matem{\'a}tica, FCEyN,}
         \centerline{Universidad de Buenos Aires, }
         \centerline{Ciudad Universitaria, Pabellon I, (C1428BCW),}
        \centerline{Buenos Aires, Argentina. }
\centerline{ldpezzo@dm.uba.ar, jrossi@dm.uba.ar}

\bigskip

        \centerline{$^2$Departamento de Matem\'atica,} 
       \centerline{Universidad T\'ecnica Federico Santa Mar\'ia }
        \centerline{Casilla V-110, Avda. Espa\~na, 1680}
        \centerline{Valpara\'iso, Chile.}
    \centerline{alexander.quaas@usm.cl}

\begin{abstract} 
	We introduce a notion of fractional convexity that extends naturally 
	the usual notion of convexity in the Euclidean space to a fractional setting. 
	With this notion of fractional convexity, we study the fractional convex envelope inside 
	a domain of an exterior datum (the largest possible fractional convex function 
	inside the domain that is below the datum outside)
 	and show that the fractional convex envelope is characterized as a
	viscosity solution to a non-local equation that is given by the 
	infimum among all possible
	directions of the $1-$dimensional fractional laplacian. For this equation we prove
	existence, uniqueness and a comparison principle (in the framework of viscosity solutions).
	In addition, we find that solutions
	to the equation for the convex envelope are related to solutions to the fractional Monge-Ampere equation. 
\end{abstract}

\section{Introduction}
	The purpose of this paper is to provide a notion of convexity
	in the fractional setting.

	First, let us recall the usual notion of convexity in the Euclidean space. 
	We fix a bounded smooth domain $\Omega \subset {\mathbb{R}}^N$.
	A function $u\colon \Omega \to  {\mathbb{R}}$ is said to be convex in $\Omega$ if,
	for any two points $x,y \in \Omega$ such that the segment $[x,y]:=\{tx+(1-t)y: t \in (0,1)\}$ (the line segment
	connecting $x$ and $y$) is contained in $\Omega$, it holds that
	\begin{equation} \label{convexo-usual}
		u(tx+(1-t)y) \leq tu(x) + (1-t) u(y), \qquad \forall t \in (0,1).
	\end{equation}
	Notice that $t \mapsto v(tx+(1-t)y) := tu(x) + (1-t) u(y)$ is just the solution to the equation 
	$v''=0$ in the segment $[x,y]$ that verifies $v(x) = u(x) $ and $v(y) = u(y)$ at the endpoints.
		We refer to \cite{Vel} for a general reference on convexity. 
	
	With this notion of convexity one can define the convex envelope inside $\Omega$ of a boundary
	datum $g\colon \partial \Omega \to  {\mathbb{R}}$ as
	\begin{equation} \label{convex-envelope-usual}
		u^* (x) \coloneqq \sup \Big\{v(x) \colon v 
		\mbox{ is convex in $\overline{\Omega}$ and verifies } v|_{\partial \Omega} \leq g \Big\}.
	\end{equation}
	
	In terms of a second order partial differential equation (PDE), a function is convex if and only if 
	\[
		\lambda_1 (D^2 u) (x)\coloneqq
		\inf \Big\{
			\langle D^2 u(x) z, z \rangle\colon z\in\mathbb{S}^{N-1}
		\Big\} \geq 0
	\] 
	in the viscosity sense. 
	Here $\mathbb{S}^{N-1}$ denotes the $(N-1)-$sphere, that is 
	$\mathbb{S}^{N-1}\coloneqq \{z\in \mathbb{R}^N \colon |z|=1\}$. 
	
		Moreover,
	the convex envelope of $g$, a continuous datum on the boundary, in a strictly convex domain 
	turns out to be the unique solution to
	\begin{align}
		\label{convex-envelope-usual-eq-1}	\lambda_1 (D^2 u) (x) = 0 \qquad &x\in  \Omega, \\[6pt]
		\label{convex-envelope-usual-eq-2}	u(x) = g (x)  \qquad &x\in  \partial \Omega.
	\end{align}
	
		The equation \eqref{convex-envelope-usual-eq-1} has to be
	interpreted in viscosity sense and the boundary condition \eqref{convex-envelope-usual-eq-2} is attained with continuity. 
	We refer to \cite{BlancRossi,HL1,OS,Ober}, and references therein.
	
	Notice that $\lambda_1$ is the smallest eigenvalue of the Hessian, that is, if
	$\lambda_1\leq \lambda_2\leq\cdots\leq\lambda_N$ are the ordered eigenvalues of
	the Hessian matrix, $D^2u$, then the equation reads as $\lambda_1=0$. 
	Also remark that 
	\[
	\lambda_1 (D^2 u) (x)=\inf \Big\{\langle D^2 u(x) z, z \rangle\colon z\in\mathbb{S}^{N-1}\Big\},
	\] 
	says that the operator that is associated to the convex envelope
	is just the infimum of the second directional derivatives of the function among all possible directions.
	
	\medskip
	
	Now we propose the following natural extension of convexity to the fractional setting. 
	Given $s\in(0,1),$
	a function $u\colon\mathbb{R}^N \to  {\mathbb{R}}$ is said to be {\it $s-$convex} in $\Omega$ if
	for any two points $x,y \in \Omega$ such that the segment $[x,y]$ is contained in 
	$\Omega$ it holds that
	\begin{equation} \label{convexo-s}
		u(t x+(1-t)y) \leq v(t x+(1-t)y), \qquad \forall t \in (0,1)
	\end{equation}
	where $v$ is just the viscosity solution 
	to $\Delta^s_1 v =0$ (the 1-dimensional $s-$fractional laplacian)
	in the segment $[x,y]$ with $v= u$ outside the segment. That is, $v$ verifies
	\[
		\Delta^s_1 v(t x+(1-t )y)
		\coloneqq C(1,s)\int_{\mathbb{R}} \frac{v(rx+(1-r)y)-v(t x+(1-t)y)}{|r-t|^{1+2s}} \, dr = 0
	\]
	for every $t \in (0,1)$ with 
	\[
		v(z)=u(z) \qquad \mbox{ for } z=tx+(1-t)y \mbox{ with }t\not\in (0,1)
	\] 
	(as usual for the fractional laplacian we have to impose an exterior datum). 
	Here $C(1, s)$ is a normalization constant whose value is irrelevant 
	for our arguments 
	(and hence it will be omitted from now on) and the integral is to be understood 
	in the principal value 
	sense. Notice that we have to use values of $u$ outside $\Omega$ since 
	the involved operator is nonlocal, therefore
	$u$ has to be defined in the whole $\mathbb{R}^N$. In addition, 
	we need that such a function $v$ is well defined, and then we have to impose 
	some additional conditions on $u,$ that is, $u$ is a locally bounded function and 
	\[
		t\mapsto u(x+tz)\in 
		L_{s}(\mathbb{R})\coloneqq\left\{f\in L^1_{loc}(\mathbb{R})\colon 
		\int_{\mathbb{R}^N}\dfrac{|f(r)|}{(1+|r|)^{1+2s}}dr <\infty\right\}
	\]
	for any $x\in\Omega$ and any $z\in\mathbb{S}^{N-1}.$
	The space
		$L_s(\mathbb{R})$ is the right space for which 
		\[
			\mu(x)=\int_{|x-y|>\varepsilon} \frac{u(y)-u(y)}{|x-y|^{1+2s}} dy
		\] 
	exists for every $\varepsilon>0.$ Moreover $\mu$ is continuous
	at the continuity points of $u.$ See for instance \cite{Garofolo, kkl, Biccari}.

	With this definition of $s-$convexity one can define the {\it $s-$convex envelope} of an exterior 
	datum $g \colon \mathbb{R}^N\setminus \Omega \to  {\mathbb{R}}$ as
	\begin{equation} \label{convex-envelope-s}
		u^* (x) \coloneqq \sup \left\{w(x) \colon w \mbox{ is $s-$convex in $\overline{\Omega}$ and verifies } 
			w|_{\mathbb{R}^N\setminus \Omega} \leq g \right\}.
		\end{equation}
	This definition makes sense when the above set of functions is not empty (in particular, this is the case
	when there exists an extension of $g$ inside $\Omega$ that is $s-$convex and from our results this 
	holds when
	$g$ is continuous and bounded). The function $u^* (x)$ is unique and $s-$convex 
	(it follows from the comparison
	principle for the fractional $s-$laplacian in $1-$dimension that the supremum 
	of $s-$convex functions is also $s-$convex).
	
	\medskip
	
	Our main result is the following:
	
	\begin{theorem} \label{teo.1.intro} 
		Assume that $\Omega$ is a {bounded} strictly convex $C^2-$domain,
		and that $g$ is continuous and bounded. 
		Then, the $s-$convex envelope is well defined and is continuous 
		in $\overline{\Omega}$ (up to the boundary) with 
		$u|_{\partial \Omega} =g|_{\partial \Omega}$ (therefore the exterior datum is taken with continuity). 
		
		Moreover, the $s-$convex envelope is characterized as being the
		unique viscosity solution to
		\begin{equation} \label{convex-envelope-s-eq} \left\{
			\begin{array}{ll}
				\displaystyle \Lambda_1^s u (x) \coloneqq \inf 
				\left\{ \int_{\mathbb{R}} 
					\frac{u(x+tz)-u(x)}{|t|^{1+2s}} 
					\, dt\colon z\in \mathbb{S}^{N-1}
				\right\} = 0 \quad & x \in \Omega ,\\[6pt]
				u (x) = g (x) \quad & x \in \mathbb{R}^N\setminus \Omega.
			\end{array}
			\right.
		\end{equation}
	\end{theorem}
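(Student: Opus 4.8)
The plan is to prove Theorem~\ref{teo.1.intro} in four stages: (i) construct a candidate solution via Perron's method, (ii) show the $s$-convex envelope $u^*$ equals this Perron solution and is a viscosity solution of \eqref{convex-envelope-s-eq}, (iii) establish a comparison principle for \eqref{convex-envelope-s-eq}, which gives uniqueness, and (iv) verify the boundary condition is attained continuously using the strict convexity of $\Omega$.

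\textbf{Step 1: Equivalence of $s$-convexity and the PDE.} First I would prove the analogue of the classical fact ``$u$ convex $\iff \lambda_1(D^2u)\ge 0$'': a locally bounded function $u$ (with the required $L_s$ growth) is $s$-convex in $\Omega$ if and only if $\Lambda_1^s u \ge 0$ in the viscosity sense in $\Omega$. The key observation is that, along a fixed segment $[x,y]\subset\Omega$, the one-dimensional operator $\Delta_1^s$ satisfies its own comparison principle, so $u\le v$ on the segment (where $\Delta_1^s v = 0$, $v=u$ outside) is equivalent to $\Delta_1^s u \ge 0$ along that segment in the viscosity sense; taking the infimum over all admissible directions $z\in\mathbb{S}^{N-1}$ and all segments through a point then identifies this with $\Lambda_1^s u\ge 0$. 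The delicate point is handling the nonlocal tails: when testing with a smooth function touching from above at $x$, one must control $\int \frac{u(x+tz)-\varphi(x+tz)}{|t|^{1+2s}}\,dt$ using that $u\le\varphi$ globally outside a small ball and $u\in L_s$; this is by now standard for fractional operators but needs to be carried out for the directional operator here. Once this equivalence is in hand, the supremum of $s$-convex functions being $s$-convex (as asserted in the introduction, via 1D comparison) shows $u^*$ is $s$-convex, hence a viscosity subsolution $\Lambda_1^s u^* \ge 0$.

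\textbf{Step 2: Perron's method and the supersolution property.} I would build barriers: for the strictly convex $C^2$ domain, a function of the form $w(x) = g(x_0) + A(\langle x-x_0, \nu\rangle) + \ldots$ adapted near each boundary point $x_0$, or more robustly a paraboloid-type function $-\beta(R^2 - |x-x_0|^2)_+^{\,?}$, should serve as an $s$-convex subsolution lying below $g$ outside and attaining $g(x_0)$ at $x_0$; strict convexity guarantees the supporting hyperplane touches $\partial\Omega$ only at $x_0$, which is exactly what makes such a barrier work. With barriers and the closedness of the admissible class under suprema, Perron's method (adapted to the nonlocal setting, cf.\ the cited literature) produces $u^*$ as a viscosity solution: the subsolution property is Step~1, and the supersolution property $\Lambda_1^s u^* \le 0$ follows from the standard bump argument — if it failed at some interior point, one could push $u^*$ up slightly along the strict direction while preserving $s$-convexity, contradicting maximality. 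I expect this to be the main obstacle: checking that the perturbed function remains $s$-convex (equivalently, a subsolution) after a local bump requires care because the operator is nonlocal and the infimum over directions is not smooth, so the usual ``add a small smooth bump'' must be combined with the directional structure.

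\textbf{Step 3: Comparison principle and conclusion.} Finally I would prove: if $\overline{u}$ is a bounded viscosity supersolution and $\underline{u}$ a bounded viscosity subsolution of \eqref{convex-envelope-s-eq} with $\underline{u}\le\overline{u}$ in $\mathbb{R}^N\setminus\Omega$, then $\underline{u}\le\overline{u}$ in $\Omega$. The strategy is the doubling-of-variables technique with the penalization $\Phi(x,y)=\underline{u}(x)-\overline{u}(y)-|x-y|^2/(2\eps)$; at an interior maximum $(x_\eps,y_\eps)$ one uses the theorem on sums adapted to nonlocal operators (evaluating $\Lambda_1^s$ on the test paraboloids and sending the nonlocal tails to the right side), with the crucial point that the \emph{same} near-optimal direction $z$ can be used in the sub- and supersolution inequalities up to a controlled error because $z\mapsto \int \frac{\cdot}{|t|^{1+2s}}$ depends continuously on $z$ on the compact sphere; the quadratic penalization terms then cancel favorably. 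Uniqueness of the solution — hence the characterization of $u^*$ — follows immediately, and applying comparison against the barriers from Step~2 pins the boundary values: $u^*(x)\to g(x_0)$ as $x\to x_0\in\partial\Omega$, giving continuity up to $\partial\Omega$ with $u^*|_{\partial\Omega}=g|_{\partial\Omega}$.
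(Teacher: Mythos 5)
Your overall shape (envelope is a subsolution via the $s$-convexity characterization, supersolution via a bump argument, uniqueness via comparison, boundary behavior via strict convexity) matches the paper, but two of your steps contain genuine gaps. The main one is Step 3: for $\Lambda_1^s$ the doubling-of-variables argument does not close ``routinely''. The operator is totally degenerate and the right-hand side is $0$, so after the cancellations you describe you only obtain $0\le o(1)$, which is no contradiction. The actual source of strictness in the paper's comparison (Theorem \ref{teo:cp}) is the nonlocal exterior tail: since the sub- and supersolution both coincide with $g$ on $\R^N\setminus\Omega$, the difference of the tail integrals along the limiting direction $z_0$ converges to $-M\int_{L_{z_0}(\bar x)\cap(\R^N\setminus\Omega)}|t|^{-1-2s}\,dt<0$, where $M>0$ is the assumed positive maximum of $u-v$; it is this term, not the quadratic penalization, that produces the contradiction. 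Moreover, before running the doubling argument one must know that the maximum of $u-v$ over $\overline\Omega$ is attained at an interior point: sub/supersolutions are only semicontinuous and satisfy the condition on $\partial\Omega$ in a generalized viscosity sense, so excluding boundary maxima is a separate and hard step (the paper's Theorem \ref{condicion-borde.77}, proved with a penalized test function built from a smooth extension of the signed distance), and it is precisely there, in the case $x_\eps\in\partial\Omega$, that strict convexity is used: it forces the segment from $x_\eps$ toward $x_0$ to leave $\Omega$ immediately, so the exterior part of the line contributes a term of order $-K\mu\,\delta_\eps^{-2s}$ that dominates all other contributions. Your sketch does not address the boundary case of the comparison at all, and without it uniqueness among all viscosity solutions is not established.

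The second gap is your barrier step. A barrier here must be defined on all of $\R^N$, lie below $g$ on $\R^N\setminus\Omega$, attain $g(x_0)$ at $x_0$, and satisfy $\Lambda_1^s w\ge 0$; but classical convexity does not imply $s$-convexity (the paper exhibits a convex function on an interval that fails to be $s$-convex, and for $s\le \tfrac12$ even affine restrictions to lines are not $s$-harmonic), so paraboloid- or cone-type functions are not automatically admissible and their directional fractional Laplacians would have to be estimated from scratch. The paper avoids explicit barriers entirely: existence is obtained by Perron's method with an obstacle approximation in the sense of Barles--Imbert (testing with $N$-dimensional functions), and continuity up to the boundary follows from the attainability theorem plus comparison, after which the envelope is identified with this unique solution. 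Finally, a smaller but real point: in your bump argument, verifying that $\max\{u^*,\phi+\delta\}$ is still $s$-convex requires the characterization of $s$-convexity through one-dimensional test functions on segments (the paper's Definition \ref{defi.2.5} and Lemma \ref{caract.usc}); it cannot be done with $N$-dimensional tests alone, whereas the comparison principle is formulated with $N$-dimensional tests. Keeping track of the two notions, and noting that solutions in the $1$-dimensional sense are solutions in the $N$-dimensional sense while the converse is recovered only a posteriori through uniqueness, is an essential part of the argument that your plan leaves implicit.
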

	
	In the course of the proof of our main result, we also obtain the following characterization of being 
		$s-$convex: a function $u\colon \mathbb{R}^N \to \mathbb{R}$ is $s-$convex in $\Omega$ if and only if 
		\begin{equation} \label{convex-s-eq}
				\begin{array}{ll}
					\displaystyle \Lambda_1^s u (x)  
					\geq  0 \qquad & x \in \Omega
				\end{array}
			\end{equation}
			in the viscosity sense.
			
		\subsection*{Classical convexity vs. fractional convexity}
		We also compare our notion of fractional convexity with the usual convexity obtaining that for 
			$s>1/2$ a classical convex function in the whole space $\mathbb{R}^N$ is $s-$convex, for details see Proposition \ref{pop:convex_implies_sconvex} below; while in a bounded domain we present simple examples showing that the usual convexity
			and the fractional convexity are different notions (none implies the other). 
	
	\subsection*{The first fractional eigenvalue}
	Remark that for the $s-$convex envelope we have an integral equation that is given by 
	the infimum among all possible directions of the $1-$dimensional fractional $s-$laplacian computed 
	at the point $x$. We call this fractional operator that is associated with this notion of 
	fractional convexity $\Lambda_1^s u$ in analogy with the first eigenvalue of the Hessian, 
	$\lambda_1 (D^2 u)$, that is given by the infimum among all directions of the $1-$dimensional 
	second derivative and is associated with the classical notion of convexity. 
	Hence, we think $\Lambda_1^{s} u$ as the \lq\lq first fractional eigenvalue".
	
	\subsection*{On our hypotheses on the data, $\Omega$ and $g$} Notice that the hypothesis that 
	$\Omega$ is strictly convex
	is used in order to show that the $s-$convex
		 is continuous up to the boundary for an exterior datum $g$ continuous and bounded. For the 
	classical notion of convexity this geometric
	condition also appears naturally and is necessary and also sufficient to obtain that a continuous 
	boundary datum is attained continuously 
	(the convex envelope of the datum is continuous in $\overline{\Omega}$), 
	see \cite{BlancRossi,OS}.

		Remark that for our definition to make sense 
		we need to assume that the exterior datum $g$ is such that we can solve the Dirichlet problem
		for the $1-$dimensional fractional $s-$laplacian in every segment inside $\Omega$ 
		(this involves values of $g$ in the line that contains this segment). 
		We ask that the datum $g$ is continuous and bounded 
		(and this guarantees that
		there is a solution for the $1-$dimensional fractional $s-$laplacian in every segment 
		inside $\Omega$ with exterior datum $g$ that is 
		uniformly bounded by a bound for $|g|$).  
		However, slightly more general data can be also considered 
		(as long as we have solvability and equiboundedness
		 of all these $1-$dimensional problems, 
		 notice that 
		 $t\to g(x+tz)\in L_s(\mathbb{R})$ is enough).

	\subsection*{Localization of $s-$convexity} 
	One can localize $s-$convexity in $\Omega$ and use only values of $u$ inside the domain just
	computing the $1-$dimensional fractional operator restricting the domain of integration to 
	the intersection of the line with $\Omega$ 
	(thus we avoid the need to consider values of $u$ outside $\Omega$). We will briefly comment
	on this localization in Section \ref{sect-localiz}.
	
	\subsection*{$s-$concavity} \label{rem.concavas} 
		As for the local case, we will say that a function $u$ is {\it $s-$concave} 
		if $-u$ is $s-$convex. Similar results can be proved for the $s-$concave envelope defined as 
		\[
			u_* (x) \coloneqq \inf 
			\left
				\{v(x) \colon v \mbox{ is $s-$concave in $\overline{\Omega}$ 
				and verifies } v|_{\mathbb{R}^N\setminus \Omega} 
				\geq g 
			\right\}.
		\]
		In this case the equation that appears is 
		\[
			\Lambda_N^s (u) (x) \coloneqq \sup
				\left\{ \int_{\mathbb{R}} \frac{u(x+tz)-u(x)}{|t|^{1+2s}} \, dt \colon z\in\mathbb{S}^{N-1}
			\right\}=0
		\]
		that is analogous to the largest eigenvalue of $D^2u$ 
		\[
			\lambda_N (D^2 u) (x)\coloneqq
			\sup \Big\{\langle D^2 u(x) z, z \rangle\colon z\in\mathbb{S}^{N-1}\Big\}=0,
		\] 
		that holds for the classical notion of concave envelope, \cite{BlancRossi,OS}.

	\subsection*{Relation with a nonlocal Monge-Ampere equation}
		Solutions to 
		\[
			\Lambda_1^{s} u (x) = 0
		\]
		are also solutions to the nonlocal version of Monge-Ampere introduced in \cite{Charro-Caffa},
		\[
	 		\inf_{A\in L} \int_{\mathbb{R}^N} \frac{u(y)-u(x)}{|A^{-1} (y-x)|^{N+2s} }dy =0, 
	 	\]
	 	where $L$ corresponds to the family of symmetric positive matrices with determinant 1,
	 	\[
	 		L\coloneqq\left\{  A\in \mathbb{S}^{N\times N} \colon A>0, \, \det(A)=1  \right\}.
	 	\]
		In fact, we have
		\[
			 \inf_{A\in L} \int_{\mathbb{R}^N} \frac{u(y)-u(x)}{|A^{-1} (y-x)|^{N+2s} }dy = 
	  		\inf_{A\in L} C \int_{\mathbb{R}} \int_{|z|=1} 
	  		\frac{u(x+tz)-u(x)}{|t|^{1+2s} |A^{-1} z|^{N+2s} } dz dt. 
	 	\]
	 	Hence, if $\Lambda_1^{s} u (x) \geq 0$ we get
	 	\[
	 		\inf_{A\in L} \int_{\mathbb{R}^N} \frac{u(y)-u(x)}{|A^{-1} (y-x)|^{N+2s} }dy =
	  		\inf_{A\in L} C \int_{|z|=1} \frac{1}{ |A^{-1} z|^{N+2s}} \int_{\mathbb{R}}  
	  		\frac{u(x+tz)-u(x)}{|t|^{1+2s} } dt dz \geq 0.
	 	\]
	 	
	 	On the other hand, if $\Lambda_1^{s} u (x) \leq 0$, there exists a sequence of directions 
	 	$z_n$, $|z_n|=1$ with 
		\[
			 \lim_{n\to\infty} 
			 \int_{\mathbb{R}}  \frac{u(x+tz_n)-u(x)}{|t|^{1+2s} } dt \leq 0.
	 	\]
	 Now, one can take a sequence of matrices $A_n\in L $ 
	 with an eigenvalue of order $n$ in the direction of $z_n$ (and 
	 all the other eigenvalues go to $0$ as $n\to\infty$) 
	 to obtain 
	 \begin{align*}
	 	\displaystyle 
	 	\inf_{A\in L} \int_{\mathbb{R}^N} \frac{u(y)-u(x)}{|A^{-1} (y-x)|^{N+2s} }dy &=
	  	\inf_{A\in L} C \int_{|z|=1} \frac{1}{ |A^{-1} z|^{N+2s}} \int_{\mathbb{R}}  
	  		\frac{u(x+tz)-u(x)}{|t|^{1+2s} } dt dz \\[6pt]
	  	&\displaystyle \leq \lim_{n\to\infty}  \int_{|z|=1} \frac{1}{ |A_n^{-1} z|^{N+2s}} 
	  	\int_{\mathbb{R}}  \frac{u(x+tz)-u(x)}{|t|^{1+2s} } dt dz\\[6pt] 
	  	&\leq 0.
	 \end{align*}
	 Notice that all these computations can be justified in a viscosity sense.
	 
	This is analogous to what happens in the local case, where solutions to $\lambda_1 (D^2u) =0$
	are also convex solutions to the local Monge-Ampere equation $\det(D^2u)=0$, see \cite{OS}.
	
	This relation with this nonlocal version of Monge-Ampere reinforces 
	the intuitive idea that $\Lambda_1^{s} u $ is the \lq\lq first fractional eigenvalue".
	
	Monge-Ampere equations and convex envelopes of a given function and 
	their contact sets play a crucial role when proving 
	Aleksandrov-Bakelman-Pucci (ABP) estimates for elliptic differential equations. 
	We would like to remark that, 
	for ABP in the fractional case we refer to \cite{paper} 
	where an envelope of a given function is defined using ideas similar to ours.

	\subsection*{Notations} 
		Throughout this paper $\Omega\subset\mathbb{R}^N$ is a bounded strictly convex  $C^2-$domain. 
		Given $x\in\Omega$ and $z\in\mathbb{S}^{N-1},$ we will denote by $L_{z}(x)$ 
		the line 
		that passes through $x$ and has direction $z$, that is,  
		\[
			L_{z}(x)\coloneqq\{x+tz\colon t\in\mathbb{R}\}.
		\]
		For any function $u,$ the positive and negative parts of $u$ are denoted by
		\[
			u_+(x)\coloneqq\max\{u(x),0\} \quad \text{ and }\quad  u_{-}(x)\coloneqq\max\{-u(x),0\}.
		\]
	
		Finally, we assume that the signed distance function to $\partial\Omega$ is positive in $\Omega$ 
		and negative in $\RR^N\setminus \overline{\Omega}.$  Throughout the rest of this article,
		$\dist$ denotes a $C^2-$function in $\RR^N$ which agrees with the signed distance function to 
		$\partial\Omega$ in a neighbourhood of $\partial \Omega.$ 
	
	\subsection*{On the definition of being a viscosity solution to $\Lambda_1^s (u)=0$} 		
		\label{sobre.def.sol.viscosa}
		Now we have to discuss the delicate issue of the notion of what is a viscosity solution in our context. 
	
		We notice that we have two notions of viscosity solution to 
		\[
			\begin{cases}
				\Lambda_1^s (u)=0 &\mbox{ in } \Omega,\\[6pt]
				u = g &\mbox{ in } \mathbb{R}^N\setminus \Omega,
			\end{cases}
		\]
		see Definitions \ref{defsol.44} and \ref{defi.2.5}. 

		In the first notion (that corresponds to what is usual in the viscosity theory) we test with $N-$dimensional functions 
		$\phi:\Omega \mapsto \mathbb{R}$ that touches
		$u$ from above at $x_0\in \Omega$ and
		we ask for 
		$$
		 \int_{\mathbb{R}} \frac{\tilde{\phi}(x_0+tz)-\phi(x_0)}{|t|^{1+2s}} \, dt \geq 0
		$$
		for any direction $z \in\mathbb{S}^{N-1}$. In computing the nonlocal $1-$dimensional operator we have
		taken $\tilde{\phi}$ such that $\tilde{\phi} =g$ outside $\Omega$ and $\tilde{\phi}=\phi$ near $x_0$. 
		We also assume the reverse inequality when the test function touches $u$ from below at $x_0$ in the 
		$N-$dimensional set $\Omega$. See Definition \ref{defsol.44}.

		The alternative definition (see Definition \ref{defi.2.5}) runs as follows: we take a direction 
		$z \in\mathbb{S}^{N-1}$ and then
		a 
		$1-$dimensional test function 
		$\phi$ that touches $u$ from above at $x_0$ 
		in the $1-$dimensional set  
		$L_{z}(x_0) \cap \Omega$. 
		Notice that now $\phi$ needs only to be defined in the $1-$dimensional set and not in the whole 
		$\Omega$. Here
		we ask for the same inequality,
		$$
		 \int_{\mathbb{R}} \frac{\tilde{\phi}(x_0+tz)-\phi(x_0)}{|t|^{1+2s}} \, dt \geq 0,
		$$
		with $\tilde{\phi}(x_0+tz) = g(x_0+tz)$ for $x_0+tz\not\in \Omega$ and 
		$\tilde{\phi}(x_0+tz)= {\phi}(x_0+tz)$ for $t$
		near $0$.
		As before, we also assume the reverse inequality when the test function touches 
		$u$ from below at $x_0$ in 
		the whole $\mathbb{R}^N$.

		Observe that if $u$ is a viscosity solution according to this second definition then it is 
		a solution according to the first one.
		This is due to the fact that when an $N-$dimensional test function $\phi$ defined in $\Omega$ touches 	
		$u$ from above at $x_0$ in $\Omega$, then the restriction
		of $\phi$ to any segment in $L_{z}(x_0)$, $\phi (x_0+t z)$, touches $u$ from above at $x_0$ 
		in $L_{z}(x_0)\cap \Omega$.
		The converse also holds but is delicate since given a $1-$dimensional test function that touches $u$ 
		in a segment there is no immediate way of obtaining an $N-$dimensional test function $\psi$
		that touches $u$ in $\Omega$ and such that the restriction of $\psi$ to the segment is $\phi$ 
		(we need to extend $\phi$ smoothly from the segment to the whole $\Omega$ 
		and still be above or below $u$).

		\subsection*{Ideas used in the proofs} 
		Our strategy to prove Theorem \ref{teo.1.intro} and deal with the two notions of solution (that we will show here that are 
		equivalent) is the 
		following: First, we will show the existence and uniqueness of a viscosity solution in the sense of the 
		first definition (touching by test functions
		in the whole $\Omega$). 
		This is accomplished via Perron's method (proving the validity of a comparison principle).
		Next, we prove that the $s-$convex envelope of an exterior datum $g$ is a solution 
		to the PDE \eqref{convex-envelope-s-eq} 
		according to the first or to the second definition 
		(testing with $1-$dimensional functions on segments). 
		Therefore, from the previous discussion, it turns out that the $s-$convex envelope is a solution to 
		\eqref{convex-envelope-s-eq} testing both as usual in the whole $\Omega$ and with 
		$1-$dimensional functions and then, from the uniqueness of such solutions 
		(in the sense of the first definition),
		we conclude that the $s-$convex envelope is given by the unique
		viscosity solution to \eqref{convex-envelope-s-eq}. Besides, we
		obtained that the two notions of viscosity solution coincide. 
		In fact, a viscosity solution testing with $1-$dimensional functions on segments is
		a viscosity solution testing with $N-$dimensional tests and the unique viscosity solution 
		testing with $N-$dimensional tests
		coincides with the $s-$convex envelope that is a solution testing with $1-$dimensional functions.

\medskip

\subsection*{The paper is organized as follows} In Section \ref{sect.2} we prove 
existence and uniqueness for viscosity solutions to the Dirichlet problem for $\Lambda_1^s u=0$
(these are consequence of the validity of a comparison result), here we use test functions touching $u$
in $\Omega$ and follow ideas from \cite{BarChasImb}; in Section \ref{sect.3} we start the analysis of $s-$convexity and
we show that  being a viscosity solution to $\Lambda_1^s u\geq 0$ (testing with $1-$dimensional functions
in segments inside $\Omega$) is equivalent to being $s-$convex; in Section \ref{sec:convexvssconvex} we compare the usual convexity with
the fractional convexity; in Section \ref{sectTeo11} we prove our main result, 
Theorem \ref{teo.1.intro}, that says that the $s-$convex envelope is characterized as the unique solution to $\Lambda_1^s u=0$
found in Section \ref{sect.2}; finally, in Section \ref{sect-localiz} we present an alternative way of defining $s-$convexity
using only values of $u$ in $\Omega$.

\section{Existence, uniqueness and a comparison principle for $\Lambda_1^s$.} \label{sect.2}

	The main result in this section is to prove a comparison principle for the problem 
	\begin{equation} \label{convex-33} 
		\begin{cases}
			\displaystyle \Lambda_1^s u (x) =\inf \left\{ \int_{\mathbb{R}} 
						\frac{u(x+tz)-u(x)}{|t|^{1+2s}} 
						\, dt\colon z\in \mathbb{S}^{N-1}
					\right\} = f(x) & x \in \Omega, \\[6pt]
					u (x) = g (x) & x \in \mathbb{R}^N\setminus \Omega.
		\end{cases}
	\end{equation}
	To this end, we borrow ideas from \cite{BarChasImb}.

\subsection{Basic notations and definition of solution.} 
	We use the notion of viscosity solution from \cite{BarChasImb}, which is the nonlocal extension of 
	the classical theory, see \cite{CIL}. 
	
	To state the precise definition of solution, we need the following: 
	Given $g\colon\R^N\setminus\Omega \to \R,$  for a function $u \colon \overline{\Omega} \to \R$
	we define the upper $g$-extension of $u$ as
	\begin{equation*}
		u^g(x) \coloneqq 
			\left \{ 
				\begin{array}{ll} 
					u(x) \quad & \mbox{if} \ x \in \Omega, \\[6pt] 
					g(x) \quad & \mbox{if} \ x \in \R^N\setminus\overline{\Omega},\\[6pt]
					\max \{ u(x), g(x) \} \quad & \mbox{if} \ x \in \partial \Omega.
				\end{array} 
			\right . 
	\end{equation*}
	In the analogous way we define $u_g$, the lower $g$-extension of $u$, replacing $\max$ by $\min$. 
	
	An important fact, that can be easily 
	verified, is that for 
	any continuous function $g \colon \R^N\setminus\Omega \to \R$ and
	any upper semicontinuous function $u \colon \overline{\Omega} \to \R,$  it holds that
	\[
		u^g = \tilde w,\quad \mbox{ with} \quad w=u \mathbf{1}_{\overline{\Omega}} + g \mathbf{1}_{\R^N\setminus\overline{\Omega}}
		\text{ in }\R^N.
	\] 
	
	Here we are using the definition of the upper (lower) semicontinuous envelope 
	$\tilde{w}$ ($\undertilde{w}$) of $w,$ that is,
	\[
			\tilde{w}(x)\coloneqq\inf_{r>0}\sup\{w(y)\colon y\in B(x,r)\}\qquad
			(\undertilde{w}(x)=\sup_{r>0}\inf\{w(y)\colon y\in B(x,r)\}),
	\]  
and $\mathbf{1}_{A}$
	denotes the indicator function of a set $A$ in $\mathbb{R}^N.$

\medskip
	We now introduce a useful notation, for $\delta > 0$ we write
		\[	
		E_{z, \delta}(u, \phi, x)\coloneqq I^1_{z, \delta}(\phi, x)+I^2_{z, \delta}(u, x)-f(x)
	\] 
	with
	\[
		\begin{array}{l}
			\displaystyle 
				I^1_{z, \delta}(\phi, x)
				\coloneqq\int_{-\delta}^\delta \frac{\phi (x+t z)-\phi(x)}{|t|^{1+2s}}dt,\\[15pt]
			\displaystyle  
				I^2_{z, \delta}(u, x)\coloneqq\int_{\RR \setminus (-\delta,\delta)} 
					\frac{u^g( x+t z)-u( x)}{|t|^{1+2s}}dt,
		\end{array}
	\]
	and then define 
	\begin{equation*}
		E_\delta(u, \phi, x) \coloneqq 
		- \inf\Big\{
			E_{z, \delta}(u, \phi, x)\colon 
				z\in \mathbb{S}^{N-1} 
			\Big\}.
	\end{equation*}

	\medskip
	
	Now we can define our notion of viscosity solution testing with $N-$dimensional functions as usual. 

	\begin{definition}\label{defsol.44}
		A bounded upper semicontinuous function $u \colon \R^N \to \R$  
		is a viscosity subsolution to the Dirichlet problem \eqref{convex-33} if
		$u \leq g$ in $\R^N\setminus\overline{\Omega}$ and if 
		for each $\delta > 0$ and $\phi \in C^2(\R^N)$ such that $x_0$ is a maximum 
		point of $u - \phi$ in $B_\delta(x_0)$, then
		\begin{equation*}
			\begin{array}{ll}
				\displaystyle
					E_\delta(u^g, \phi, x_0) \leq 0 & \quad \mbox{if} \ x_0 \in \Omega, \\[6pt]
				\displaystyle 
					\min \left\{ E_\delta(u^g, \phi, x_0), u(x_0) - g(x_0) \right\} \leq  0 
					& \quad \mbox{if} \ x_0 \in \partial \Omega.
			\end{array}
		\end{equation*}
		When $u$ is not upper semicontinuous we ask for the upper semicontinuous envelope of $u$ to be a subsolution.

		In an analogous way, we define viscosity supersolutions (reversing the inequalities) 
		and viscosity solutions (asking that $u$ is both a supersolution and a subsolution) to \eqref{convex-33}.
	\end{definition}

		In this work, we only consider bounded continuous exterior data, 
		but straightforward extensions to unbounded exterior data  are possible under certain growth 
		condition at infinity.  In fact, in the proof of the existence (we will use the Perron method) and
		as in 
		Theorems 1  and 2 of \cite{Barles-Imbert},  
		we can assume, for instance, 
		$|g(x)|\leq C(1+R(x))$ where $R(x)$ is a fixed positive function so that the operator is well defined
		(for example, a linear grow of $R$ at infinity suffices).

	\subsection{Attainability of the exterior datum.}
		{Now we follow ideas from \cite{BarChasImb}. 
		The main difference with respect to \cite{BarChasImb} is the 
		estimate after equation 
		\eqref{eq:attainablility5} below, 
		where the strict convexity of the domain plays a 
		crucial role, see  Example 5.1 below for non convex domain 
		where a loss of the boundary condition occurs.}
	
		We first prove that the exterior datum is attained in a classical continuous way.

	\begin{theorem}\label{condicion-borde.77} 
		Assume that  $f\in C(\overline{\Omega})$ and 
		$g\in C(\overline{\R^N\setminus\Omega})$ are bounded
		and that $\Omega$ is a {bounded} strictly convex $C^2-$domain. 
		Let $u,v\colon \mathbb{R}^N\to\mathbb{R}$ 			
		be  viscosity sub and supersolution of \eqref{convex-33}, 
		in the sense of Definition \ref{defsol.44}, respectively. Then,
		\begin{enumerate}[(i)]
			\item $u\le g$ on $\partial \Omega$; 
			
			\medskip
			
			\item $v\ge g$ on $\partial \Omega$.
		\end{enumerate}
	\end{theorem}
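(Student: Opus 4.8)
The statement is symmetric in $(i)$ and $(ii)$ (pass from $v$ to $-v$, from $g$ to $-g$, from $f$ to $-f$), so it suffices to prove $(i)$: a bounded USC viscosity subsolution $u$ of \eqref{convex-33} satisfies $u \le g$ on $\partial\Omega$. Fix $x_0 \in \partial\Omega$; the plan is to argue by contradiction, assuming $u(x_0) > g(x_0) + 2\theta$ for some $\theta > 0$, and to build a smooth test function $\phi$ touching $u^g$ from above at a nearby boundary point $\bar x$ for which \emph{both} alternatives in the subsolution inequality at a boundary point fail, i.e. $E_\delta(u^g,\phi,\bar x) > 0$ \emph{and} $u(\bar x) - g(\bar x) > 0$. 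The test function will be of the standard Barles--Chasseigne--Imbert type: take $\phi(x) = u(x_0) - \beta\,\dist(x) + K|x - x_0|^2 + o$-corrections, where $\dist$ is the signed-distance $C^2$ extension fixed in the Notations, $\beta > 0$ is large, and $K$ is chosen so that $u - \phi$ has an interior-in-$B_\delta$ maximum at some point $\bar x$ close to $x_0$; by semicontinuity of $u$ and continuity of $g$, for $\bar x$ close enough to $x_0$ we still have $u(\bar x) > g(\bar x)$, killing the second alternative. So everything reduces to showing that, for a suitable choice of the parameters, $E_\delta(u^g,\phi,\bar x) > 0$, equivalently $E_{z,\delta}(u^g,\phi,\bar x) < 0$ \emph{for every} direction $z \in \mathbb{S}^{N-1}$, uniformly.

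The estimate of $E_{z,\delta}$ splits, as in the definition, into the local piece $I^1_{z,\delta}(\phi,\bar x)$ and the tail piece $I^2_{z,\delta}(u^g,\bar x)$, minus $f(\bar x)$. The local piece is controlled by the $C^2$ bound on $\phi$: $|I^1_{z,\delta}(\phi,\bar x)| \le C(\|D^2\phi\|_\infty)\,\delta^{2-2s}$, which is small for $\delta$ small (this is where $s < 1$ matters, so $\delta^{2-2s} \to 0$). The main work is the tail piece, and this is where strict convexity of $\Omega$ enters, exactly as the authors flag. Along the ray $\bar x + t z$ with $|t| \ge \delta$, write $u^g(\bar x + tz) - u(\bar x) = [u^g(\bar x+tz) - g(x_0)] + [g(x_0) - u(\bar x)]$. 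The second bracket is $\le -2\theta + o(1)$ as $\bar x \to x_0$, so it contributes a strictly negative term $\le -\theta \int_{|t|\ge\delta} |t|^{-1-2s}\,dt = -c\,\theta\,\delta^{-2s}$. For the first bracket, one splits the $t$-integral into the part where $\bar x + tz \in \Omega$ and the part where $\bar x + tz \notin \Omega$. Outside $\Omega$, $u^g = g$, and since $g$ is continuous and bounded, $u^g(\bar x+tz) - g(x_0) = g(\bar x+tz) - g(x_0)$ is bounded; crucially, by \emph{strict} convexity of the $C^2$ domain, the chord of $L_z(\bar x) \cap \Omega$ through a boundary point has length $O(\sqrt{\,\cdot\,})$ in the transversal directions and, more importantly, for $\bar x$ near $\partial\Omega$ the set $\{t : \bar x + tz \in \Omega\}$ is a short interval — its length goes to $0$ as $\bar x \to \partial\Omega$ uniformly in $z$ up to the tangential cone — so the portion of the tail integral where we see interior values of $u$ (which are only bounded above, not below, a priori) is taken over a small $t$-set and, using $u \le \phi$ on $B_\delta$ plus the a priori bound $u \le \|u\|_\infty$, stays controlled.

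The mechanism is then: $E_{z,\delta}(u^g,\phi,\bar x) \le C\delta^{2-2s} - c\,\theta\,\delta^{-2s} + (\text{bounded}) \cdot (\text{small } t\text{-measure}) + C\|g\|_\infty\,\delta^{-2s}\cdot\omega(\,\cdot\,)$, and by first fixing $\delta$ small, then taking $\bar x$ sufficiently close to $x_0$ (so the ``small $t$-measure'' and the modulus-of-continuity term $\omega$ are negligible against $c\theta\delta^{-2s}$), the dominant term is the strictly negative $-c\theta\delta^{-2s}$, giving $E_{z,\delta} < 0$ for all $z$, hence $E_\delta(u^g,\phi,\bar x) > 0$. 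Combined with $u(\bar x) - g(\bar x) > 0$, this contradicts the boundary clause of Definition \ref{defsol.44}. I expect the genuine obstacle to be making the geometric estimate uniform in $z \in \mathbb{S}^{N-1}$: for directions $z$ nearly tangent to $\partial\Omega$ at $x_0$ the chord $\{t : \bar x + tz \in \Omega\}$ need not be short, and one must use the quantitative strict convexity (a uniform lower bound on the curvature, equivalently $\dist$ being uniformly semiconcave near $\partial\Omega$, $D^2\dist \le -\kappa I$ in tangential directions) to control how far along such a ray one stays inside $\Omega$ and to bound the resulting contribution — this is precisely the ``estimate after \eqref{eq:attainablility5}'' the authors single out, and where Example 5.1 shows the argument must break for non-convex domains.
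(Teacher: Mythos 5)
The strategy collapses at the very first logical step: you have the quantifier over directions inverted. Since $\Lambda_1^s$ is an \emph{infimum} over $z$, and $E_\delta(u^g,\phi,\bar x)=-\inf_z E_{z,\delta}(u^g,\phi,\bar x)$, the inequality $E_\delta(u^g,\phi,\bar x)>0$ is equivalent to the existence of \emph{one} direction $z$ with $E_{z,\delta}<0$, not to $E_{z,\delta}<0$ for every $z$; dually, the subsolution inequality $E_\delta\le 0$ at the touching point already hands you $E_{z,\delta}\ge 0$ in \emph{every} direction. The paper exploits exactly this: after \eqref{eq:attainablility5} it picks one specific direction $z_\varepsilon$ (toward the nearest boundary point when $x_\varepsilon\in\Omega$, toward $x_0$ when $x_\varepsilon\in\partial\Omega$) and shows that the corresponding one-dimensional integral is forced below $-\|f\|_\infty$, because just across the boundary the datum satisfies $g\le g(x_0)+\mu/4\le u(x_\varepsilon)-\mu/2$, producing the term $-K\mu\delta_\varepsilon^{-2s}$. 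Your plan to prove negativity \emph{uniformly in $z$} is both unnecessary and unattainable: for $z$ pointing into $\Omega$ the ray stays inside for a length comparable to $\operatorname{diam}(\Omega)$, where $u^g=u$ is only bounded by $\|u\|_\infty$, so the tail contributes up to $+C(\|u\|_\infty+\|g\|_\infty)\,\delta^{-2s}$, which swamps your $-c\,\theta\,\delta^{-2s}$. Correspondingly, your geometric claim that strict convexity makes the chord $\{t:\bar x+tz\in\Omega\}$ short \emph{uniformly in $z$} as $\bar x\to\partial\Omega$ is false (it has length of order one for the inward normal); in the paper strict convexity is used for a different purpose, namely to guarantee that beyond the chosen exit point the ray never re-enters $\Omega$, so the exterior datum close to $g(x_0)$ is seen on a full interval adjacent to the touching point.

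Two further gaps would remain even after fixing the quantifier. First, your construction is single-scale: with $\phi=u(x_0)-\beta\,\dist+K|x-x_0|^2$ touching $u^g$ only in $B_\delta(\bar x)$, the interior values of $u$ outside $B_\delta$ are uncontrolled and the crude far-field bound is of the same order $\delta^{-2s}$ as your negative term, so the estimate does not close even in the good direction. The paper avoids this with the two-scale penalization $\omega_\varepsilon(y)=a(y-x_0)/\varepsilon+b(\dist(y)/\varepsilon)$ and a \emph{global} maximum point $x_\varepsilon$ of $u^g-\omega_\varepsilon$: the term $b(\dist/\varepsilon)$ forces $\delta_\varepsilon=\dist(x_\varepsilon)=o(\varepsilon)$, so the negative term $-K\mu\delta_\varepsilon^{-2s}$ dominates the local term $C\delta_\varepsilon^{2-2s}/\varepsilon^{2}$, the inward one-sided term (estimated by replacing $u$ with $\omega_\varepsilon$, using the global max), and the far-field bound $C\varepsilon^{-2s}$. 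Second, upper semicontinuity of $u$ alone does not give $u(\bar x)>g(\bar x)$ at a nearby touching point (USC only provides upper bounds near $x_0$); in the paper this lower bound comes from the maximum property of $\Psi_\varepsilon$, see \eqref{eq:attainablility2.9}, which compares the value at $x_\varepsilon$ with the competitor $x_0$. So the broad outline (contradiction at a boundary point, distance-type test function, strict convexity controlling the geometry of rays near $\partial\Omega$) points in the right direction, but as written the argument would fail at the uniform-in-$z$ step and at the single-scale estimates.
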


	\begin{proof}		
		We begin by proving {\it (i)}. Suppose by contradiction that 
		there is $x_0\in\partial\Omega$ such that 
		\[
			\mu \colon=u(x_0)-g(x_0)>0.
		\] 
		Hence, we have that $u^g(x_0)=u(x_0)$.
		Since $g$ is continuous, there is $R_0 > 0$ such that 
		\begin{equation}\label{eq:attainablility1}
			|g(x_0)-g(y)| \leq \frac{\mu}4
			\quad \forall y\in B(x_0,2R_0)\cap (\RR^N\setminus\Omega).
		\end{equation}
	 	We may with  no loss of
	 	generality assume that $R_0<\max\{\|x-y\|\colon x,y\in\overline{\Omega}\}.$
	 	
		We now introduce two auxiliary functions:
		\begin{itemize}
			\item $a\colon\RR^N\to \RR$, a smooth bounded function such that 
				$a(0)=0,$ $a(y)>0$ if $y\neq0,$ 
				\[
					\liminf\limits_{|y|\to\infty }a(y)>0
				\] 
				and $D^2 a$ is bounded;
			\item $b\colon\RR\to \RR$, a smooth bounded and increasing function 
				which is concave in $(0,+\infty),$ and \\
				such that $b(0)=0,$ $b(t)>-\frac{\mu}4$ in $\RR,$  
				$b^{\prime}(0)=k_1$ and $b^{\prime\prime}(0)=-k_2$ with $k_1,k_2>0.$ 
		\end{itemize} 
	
		Next, we use these two functions to define for any $\varepsilon>0$ the penalized test function
		\[
			\omega_\varepsilon (y)\coloneqq\frac{a(y-x_0)}{\varepsilon}+
			b\left(\frac{\dist(y)}{\varepsilon}\right)
		\]
		(recall that $\dist$ is a smooth extension of the signed distance to the boundary, $\partial \Omega$). 
		
		Thus 
		\[
			\Psi_\varepsilon(y)= u^g(y)-\omega_\varepsilon(y)
		\]
		is upper semicontinuous for any  $\varepsilon$ small. 
		Then, for any  $\varepsilon$ small, $\Psi_\varepsilon$ attains a global maximum at a point
		$x_\varepsilon.$
		Therefore, we have 
		\begin{equation}\label{eq:attainablility2}
			u^g(x_\varepsilon)-\omega_\varepsilon(x_\varepsilon)\geq 
			u^g(x_0)-\omega_\varepsilon(x_0)=u^g(x_0)
		\end{equation}
		and hence,
		\begin{equation}\label{eq:attainablility2.5}
	 		\frac{a(x_\varepsilon-x_0)}{\varepsilon}\leq 
	 			u^g(x_\varepsilon)-u(x_0)-b\left(\frac{\dist(x_\varepsilon)}{\varepsilon}\right)
	 		\leq
	 		u^g(x_\varepsilon)-u(x_0)+\|b\|_\infty.
		\end{equation}
		From here, we get that 
		\begin{equation}\label{eq:attainablility3}
			x_\varepsilon\to x_0 \text{ as } \varepsilon \to 0,
		\end{equation}
		In particular, $x_\varepsilon\in B(x_0,2R_0)$ for any $\varepsilon$ small enough. 
		Now, using again the properties of $a$ and $b$ we get
		\begin{equation}\label{eq:attainablility2.9}
			g(x_0)+\mu=u(x_0)\leq u^g(x_{\varepsilon})- 
			\omega(x_{\varepsilon})
			\leq u^g(x_{\varepsilon})+\frac{\mu}{4}.
		\end{equation}
		Therefore $x_\varepsilon\in\overline{\Omega}.$ 
		Then,  $\dist(x_\varepsilon)\ge0$ and
		\[
			0\le b\left(\frac{\dist(x_{\varepsilon} )}{\varepsilon}\right).
		\]

		On the other hand, since $a$ is non-negative, by \eqref{eq:attainablility2}, 
		we have
		\[
			0\le b\left(\frac{\dist(x_{\varepsilon} )}{\varepsilon}\right)
			\le u^g(x_{\varepsilon})-u(x_{0}).
		\]
		Hence, since $u^g$ is upper semicontinuous, $u^g(x_0)=u(x_0),$
		we obtain
		\begin{equation}\label{eq:attainablility3.5}
			b\left(\frac{\dist(x_{\varepsilon} )}{\varepsilon}\right)
			\to 0 \mbox{ as } \varepsilon\to 0.
		\end{equation}
		
		By \eqref{eq:attainablility2.5} and \eqref{eq:attainablility3.5},
		using the properties of $a$ and $b$, we have that
		\begin{equation}\label{eq:attainablility4}
			\frac{a\left(x-x_0\right)}{\varepsilon}
			\to 0,\quad
			\frac{\dist(x_{\varepsilon} )}{\varepsilon}
			\to 0 \quad \mbox{ and } \quad u^g(x_{\varepsilon})\to u(x_0)
		\end{equation}
		as $\varepsilon\to0,$ since $u^g$ is upper semicontinuous, $u^g(x_0)=u(x_0)$
		and $x_\varepsilon\in\overline{\Omega}$ for any $\varepsilon$ small enough.

		Since $u^g(x_\varepsilon)\to u(x_0)=g(x_0)+\nu$ and $g$ is continuous,
		if $x_\varepsilon\in\partial\Omega$ then $u(x_\varepsilon)>g(x_\varepsilon)$ 
		for any $\varepsilon$ small enough. Now, using that $u$ is a viscosity subsolution  
		of \eqref{convex-33} in $\Omega$ in the sense of Definition \ref{defsol.44}, we have that
		\begin{equation}\label{eq:attainablility5}
			E_\delta(u^g, \omega_\varepsilon, x_\varepsilon)\le0.
		\end{equation}

		\medskip
		
		\noindent{\it Case 1:} $x_\varepsilon\in\Omega.$ 
		
		Let $\bar x_\varepsilon\in\partial\Omega$ be such that 
		$\delta_\varepsilon\coloneqq\dist(x_{\varepsilon} )
		=\|x_\varepsilon-\bar x_\varepsilon\|$ and let
		$z_\varepsilon\coloneqq\tfrac{\bar x_\varepsilon-x_\varepsilon}{\|x_\varepsilon-\bar x_\varepsilon\|}.$ 
		Then, by \eqref{eq:attainablility5}, we get
		\[
			E_{z_{\varepsilon},\delta_\varepsilon}(u^g, \omega_\varepsilon, x_\varepsilon)\ge0
		\]
		and therefore
		\begin{equation}\label{eq:attainablility6}
			-\|f\|_\infty
			\le f(x_\varepsilon)\le I^1_{z_\varepsilon, \delta_\varepsilon}(\omega_\varepsilon, x_\varepsilon)+
			I^2_{z_\varepsilon, \delta_\varepsilon}(u^g,x_\varepsilon).
		\end{equation}
			
		Without loss of generality we suppose that $0<\delta_\varepsilon<\varepsilon<R_0$ 
		(see \eqref{eq:attainablility1}) due to  \eqref{eq:attainablility3}
		and   \eqref{eq:attainablility4}.  
		
		Observe that, in this case, there is $d_\varepsilon\ge\delta_\varepsilon$ such that
		\begin{align*}
			&x_\varepsilon +tz_\varepsilon\in\Omega \quad\forall t\in(-d_\varepsilon,\delta_\varepsilon),\\
			&x_\varepsilon +tz_\varepsilon\not\in\Omega \quad\forall 
			t\not\in(-d_\varepsilon,\delta_\varepsilon).
		\end{align*}
		Now, let us analyze the integrals that appear in $E_{z_{\varepsilon},\delta_\varepsilon}(u^g, \omega_\varepsilon, x_\varepsilon)$, 
		we have
		\[
			I^1_{z_\varepsilon, \delta_\varepsilon}(\omega_\varepsilon, x_\varepsilon)+
			I^2_{z_\varepsilon, \delta_\varepsilon}(u^g, x_\varepsilon)\le  
			I^1_{z_\varepsilon, \delta_\varepsilon}(\omega_\varepsilon, x_\varepsilon)+
			J^1_{z_\varepsilon, \varepsilon}(u^g, x_\varepsilon)
			+J^2_{z_\varepsilon, \varepsilon}(g, x_\varepsilon)
			+J^3_{z_\varepsilon, \varepsilon}(u, x_\varepsilon)
		\]
		where 
		\begin{align*}
			&J^1_{z_\varepsilon, \varepsilon}(u^g, x_\varepsilon)\coloneqq
			\int_{C_{\varepsilon}}\dfrac{u^g(x_\varepsilon+t z_\varepsilon)-
				u(x_\varepsilon)}{|t|^{1+2s}}\,dt, \quad \mbox{with } 
				C_{\varepsilon}\coloneqq(-\infty,-\varepsilon)\cup(\varepsilon,\infty),\\[6pt]
			&J^2_{z_\varepsilon, \varepsilon}(g, x_\varepsilon)\coloneqq
			\int_{\delta_\varepsilon}^{\varepsilon}
				\dfrac{g(x_\varepsilon+tz_\varepsilon)-u(x_\varepsilon)}{|t|^{1+2s}}\,dt, \\[6pt]
			&J^3_{z_\varepsilon, \delta}(u, x_\varepsilon)\coloneqq
			\int_{-\varepsilon}^{-\delta_\varepsilon}
				\dfrac{u(x_\varepsilon+t z_\varepsilon)-
				u(x_\varepsilon)}{|t|^{1+2s}}\,dt.
		\end{align*}
		
		Since $g$ is bounded and $u$ is upper semicontinuous in $\overline{\Omega}$, we have that there 
		is a positive constant $C$ independent of
		$\varepsilon$ such that
		\begin{equation}\label{eq:attainablility7}
			|J^1_{z_\varepsilon, \varepsilon}(u^g, x_\varepsilon)|\le C \varepsilon^{-2s}.
		\end{equation}
		
		On the other hand, by \eqref{eq:attainablility1} and 
		\eqref{eq:attainablility2.9}, there is a positive 
		constant $K$ independent of $\varepsilon$  such that
		\[
			J^2_{z_\varepsilon,\delta}(g, x_\varepsilon)\le
			\int_{\delta_\varepsilon}^\varepsilon
			\frac{g(x_0)+\tfrac{\mu}4-u(x_\varepsilon)}{|t|^{1+2s}}dt
			\le -\frac12\int_{\delta_\varepsilon}^\varepsilon
			\frac{dt}{|t|^{1+2s}}.
		\]
		Therefore there is a positive 
		constant $K$ independent of $\varepsilon$  such that
		\begin{equation}\label{eq:attainablility8}
			J^2_{z_\varepsilon,\delta}(g, x_\varepsilon)\le 
			-K\mu\left(\delta_\varepsilon^{-2s}-\varepsilon^{-2s}\right).
		\end{equation}
		
		By the properties of $a$ and $b,$ we have
		\begin{equation}\label{eq:propayb}
			\begin{aligned}
				D\omega_\varepsilon(x_\varepsilon)&=\dfrac{o(1)}{\varepsilon}+
					\dfrac{k_1+o(1)}{\varepsilon}	D\dist(x_\varepsilon );\\[6pt]
				D^2\omega_\varepsilon(x_\varepsilon)&=\dfrac{O(1)}{\varepsilon}+
					\dfrac{k_1+o(1)}{\varepsilon}	D^2\dist(x_\varepsilon )
					-\dfrac{k_2+o(1)}{\varepsilon^2}D\dist(x_\varepsilon )\otimes
					D\dist(x_\varepsilon );
			\end{aligned}
		\end{equation}
		and therefore there is a positive 
		constant $C$ independent of $\varepsilon$ and $\delta$ such that
		\begin{equation}\label{eq:attainablility9}
			J^3_{z_\varepsilon,\delta}(u, x_\varepsilon)\le
			J^3_{z_\varepsilon,\delta}(\omega_\varepsilon, x_\varepsilon)\le 
			C\kappa(\varepsilon,s)\quad\mbox{where }\kappa(\varepsilon,s)\coloneqq
			\begin{cases}
				\dfrac{|\delta_\varepsilon^{1-2s}
				-\varepsilon^{1-2s}|}{\varepsilon} &\mbox{if }s\neq\dfrac12,\\[12pt]
				-\dfrac{\ln\left(\delta_\varepsilon / \varepsilon \right)}{\varepsilon}
				&\mbox{if }{s=\dfrac12},\\[5pt]
			\end{cases} 
		\end{equation}
		and
		\begin{equation}\label{eq:attainablility10}
			|I^1_{z_\varepsilon, \delta_\varepsilon}(\omega_\varepsilon, x)|\le \dfrac{C}{\varepsilon^2}
			\delta_\varepsilon^{2-2s}.
		\end{equation}
		
		Then, by \eqref{eq:attainablility6}, \eqref{eq:attainablility7},
		\eqref{eq:attainablility8}, \eqref{eq:attainablility9}, and \eqref{eq:attainablility10}, we get
		\[
			-\|f\|_\infty
			\le \frac{C}{\delta_\varepsilon^{2s}}
			\left[
				\left(\dfrac{\delta_\varepsilon}{\varepsilon}\right)^{2}+
				(1+K\mu)\left(\dfrac{\delta_\varepsilon}{\varepsilon}\right)^{2s}
				+
				\kappa(\varepsilon,s)\delta_\varepsilon^{2s}-K\mu
			\right].
		\]
		Finally, from \eqref{eq:attainablility4}, taking $\varepsilon$ small enough we have
		\[
			-\|f\|_\infty
				\le \frac{C}{\delta_\varepsilon^{2s}}
				\left[
					\left(\dfrac{\delta_\varepsilon}{\varepsilon}\right)^{2}+
					(1+k\mu)\left(\dfrac{\delta_\varepsilon}{\varepsilon}\right)^{2s}+
					\kappa(\varepsilon,s)\delta_\varepsilon^{2s}-K\mu
				\right]
				<-\|f\|_\infty.
		\]
		and we have a contradiction.
		
		\medskip
		
		\noindent{\it Case 2:} $x_\varepsilon\in\partial\Omega.$ 
		
		In this case we take, $\delta_\varepsilon\coloneqq\|x_\varepsilon- x_0\|$ and let
		$z_\varepsilon\coloneqq\tfrac{x_0-x_\varepsilon}{\|x_0 - x_\varepsilon \|}.$ 
		Then by \eqref{eq:attainablility5}, we get
		\[
			E_{z_{\varepsilon},\delta_{\varepsilon}}(u^g, \omega_\varepsilon, x_\varepsilon)\ge0
		\]
		and therefore
		\begin{equation}\label{eq:attainablility66}
			-\|f\|_\infty
			\le f(x_\varepsilon)\le I^1_{z_\varepsilon, \delta_{\varepsilon}}
			(\omega_\varepsilon, x_\varepsilon)+
			I^2_{z_\varepsilon, \delta_{\varepsilon}}(u,x_\varepsilon).
		\end{equation}
		
		As in the previous case, without loss of generality we suppose that 
		$0<\delta_\varepsilon<\varepsilon<R_0.$
		
		Here, since $\Omega$ is strictly convex, we have that
		\begin{align*}
			&x_\varepsilon +tz_\varepsilon\in\Omega \quad\forall t\in(0,\delta_\varepsilon),\\[6pt]
			&x_\varepsilon +tz_\varepsilon\not\in\Omega \quad\forall t
			\not\in(0,\delta_\varepsilon).
		\end{align*} 
		Then, we compute
		\[
			I^1_{z_\varepsilon, \delta_\varepsilon}(\omega_\varepsilon, x)+
			I^2_{z_\varepsilon, \delta_\varepsilon}(u^g, x_\varepsilon)\le  
			I^1_{z_\varepsilon, \delta_\varepsilon}(\omega_\varepsilon, x_\varepsilon)+
			J^1_{z_\varepsilon,\varepsilon}(u^g, x_\varepsilon)
			+J^2_{z_\varepsilon, \delta_\varepsilon}(g, x_\varepsilon)
		\]
		where
		\begin{align*}
			&J^1_{z_\varepsilon, \varepsilon}(u^g, x_\varepsilon)\coloneqq
			\int_{C_{\varepsilon}}\dfrac{u^g(x_\varepsilon+t_\varepsilon z_\varepsilon)-
				u(x_\varepsilon)}{|t|^{1+2s}}\,dt,
				 \quad { \mbox{where } 
				C_{\varepsilon}\coloneqq(-\infty,-\varepsilon)\cup(\varepsilon,\infty)},\\[6pt]
			&J^2_{z_\varepsilon, \delta_\varepsilon}(g, x_\varepsilon)\coloneqq
			\int_{A_\varepsilon}
				\dfrac{g(x_\varepsilon+t_\varepsilon z_\varepsilon)-u(x_\varepsilon)}{|t|^{1+2s}}\,dt, 
				\quad \mbox{where } A_{\varepsilon}
				=(-\varepsilon,-\delta_{\varepsilon})\cup(\delta_\varepsilon,\varepsilon).
		\end{align*}
		
		As in the above case,  there are two positive constants $C$ and $K$ independent of
		$\varepsilon$  such that
		\begin{align*}
			|J^1_{z_\varepsilon, \delta_\varepsilon}(u^g, x_\varepsilon)|&\le C\varepsilon^{-2s},\\[6pt]
			J^2_{z_\varepsilon,\delta_\varepsilon}(g, x_\varepsilon)&\le 
			-K\mu\left(\delta_\varepsilon^{-2s}-\varepsilon^{-2s}\right),\\[6pt]
			|I^1_{z_\varepsilon, \delta_\varepsilon}(\omega_\varepsilon, x)|&\le \dfrac{C}{\varepsilon^2}
			\delta_{\varepsilon}^{2-2s}.
		\end{align*}

		Then, we get
		\[
			-\|f\|_\infty
			\le \frac{1}{\delta_\varepsilon^{2s}}
			\left\{C\left[
				\left(\dfrac{\delta_{\varepsilon}}{\varepsilon}\right)^2+
				(1+\mu)\left(\dfrac{\delta}{\varepsilon}\right)^{2s}
			\right]-K\mu\right\}.
		\]
		Finally, from \eqref{eq:attainablility4}, taking $\varepsilon$ small enough we have
		\[
		-\|f\|_\infty
			\le \frac{1}{\delta_\varepsilon^{2s}}
			\left\{C\left[
				\left(\dfrac{\delta_{\varepsilon}}{\varepsilon}\right)^2+
				(1+\mu)\left(\dfrac{\delta}{R_0}\right)^{2s}
			\right]-K\mu\right\} <-\|f\|_\infty.
		\]
		and we have again a contradiction.

		\medskip
		
		Arguing similarly, using that $\Omega$ is strictly convex and 
		doing some simple changes in $\omega_\varepsilon$, we obtain (ii).
		\end{proof}
		

	\subsection{Comparison principle.}
	Now, we prove a comparison principle for \eqref{convex-33}.

	\begin{theorem}\label{teo:cp}
		Assume that $f\in C(\overline{\Omega})$ and 
		$g\in C(\overline{\R^N\setminus\Omega})$ are 
		bounded and that $\Omega$ is a {bounded} strictly convex $C^2-$domain. 
		Let $u,v\colon \mathbb{R}^N\to\mathbb{R}$ 			
		be a viscosity sub and supersolution of \eqref{convex-33}, 
		in the sense of Definition \ref{defsol.44}, then $$u\le v$$ in $\mathbb{R}^N$.
	\end{theorem}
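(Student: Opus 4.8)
The plan is to argue by contradiction, via the doubling of variables in the nonlocal viscosity framework of \cite{BarChasImb}. Since $u$ is a subsolution, $u\le g$ in $\RR^N\setminus\overline{\Omega}$, and by Theorem \ref{condicion-borde.77} also $u\le g$ and $v\ge g$ on $\partial\Omega$; hence if $u\le v$ fails it must fail inside $\Omega$, so $M\coloneqq\sup_{\Omega}(u-v)>0$. The key preliminary step — and, to my mind, the main obstacle — is that, because the operator is nonlocal and the exterior datum anchors the solution, one can turn $v$ into a \emph{strict} supersolution simply by adding a positive constant. Fix $\tau\in(0,M)$ and set $v_\tau\coloneqq v+\tau$. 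Then $v_\tau\ge v\ge g$ outside $\Omega$, so by Theorem \ref{condicion-borde.77} both $u^g$ and $(v_\tau)_g$ agree with $g$ on $\RR^N\setminus\Omega$. For $x_0\in\Omega$ and any $z\in\mathbb{S}^{N-1}$ one checks directly from the definition of the $g$-extension that $(v_\tau)_g(x_0+tz)-v_\tau(x_0)$ equals $v_g(x_0+tz)-v(x_0)$ when $x_0+tz\in\Omega$ and equals $v_g(x_0+tz)-v(x_0)-\tau$ when $x_0+tz\notin\Omega$; therefore
\[
 I^2_{z,\delta}((v_\tau)_g,x_0)=I^2_{z,\delta}(v_g,x_0)-\tau\!\!\int_{\{|t|>\delta,\;x_0+tz\notin\Omega\}}\!\!\frac{dt}{|t|^{1+2s}}\;\le\;I^2_{z,\delta}(v_g,x_0)-\tau\kappa_0,
\]
with $\kappa_0\coloneqq\int_{\{|t|>\operatorname{diam}\Omega\}}|t|^{-1-2s}\,dt>0$ (only the boundedness of $\Omega$ is used here), while $I^1_{z,\delta}$ is unchanged by a constant. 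Testing $v$ with $\psi-\tau$ whenever $\psi\in C^2$ touches $v_\tau$ from below at $x_0\in\Omega$, and using that $v$ is a supersolution, this yields $E_\delta((v_\tau)_g,\psi,x_0)\ge\tau\kappa_0>0$. It thus suffices to prove $u\le v_\tau$ for every $\tau\in(0,M)$ and then let $\tau\to0$.

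Next I would double the variables. For $\epsilon,\gamma>0$ set
\[
 \Phi_{\epsilon,\gamma}(x,y)\coloneqq u^g(x)-(v_\tau)_g(y)-\frac{|x-y|^2}{2\epsilon}-\gamma\,\rho(x)-\gamma\,\rho(y),\qquad \rho(x)=(1+|x|^2)^{1/2},
\]
which (thanks to the coercive term $\gamma\rho$) attains a maximum over $\RR^N\times\RR^N$ at some $(x_\epsilon,y_\epsilon)$. Since $u^g-(v_\tau)_g$ equals $u-v-\tau$ in $\Omega$ and vanishes on $\RR^N\setminus\Omega$, and $\sup(u-v-\tau)=M-\tau>0$, the standard penalization estimates give, for $\gamma$ small and $\epsilon\to0$, that $|x_\epsilon-y_\epsilon|^2/\epsilon\to0$ and $x_\epsilon,y_\epsilon$ converge to a point $\hat x\in\Omega$; in particular $x_\epsilon,y_\epsilon\in\Omega$ once $\epsilon$ is small. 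The attainment/localization here is routine but is the other point requiring care.

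The quadratic-plus-$\gamma\rho$ functions produced by the penalization touch $u$ from above at $x_\epsilon$ and $v_\tau$ from below at $y_\epsilon$ in small balls inside $\Omega$. Feeding these into Definition \ref{defsol.44} for the subsolution gives, for \emph{every} $z$, $I^1_{z,\delta}(\phi,x_\epsilon)+I^2_{z,\delta}(u^g,x_\epsilon)\ge f(x_\epsilon)$, while the strict supersolution property of $v_\tau$ from the first step gives, for every $\eta>0$, \emph{some} $z_\eta$ with $I^1_{z_\eta,\delta}(\psi,y_\epsilon)+I^2_{z_\eta,\delta}((v_\tau)_g,y_\epsilon)\le f(y_\epsilon)-\tau\kappa_0+\eta$. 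Subtracting with $z=z_\eta$, two facts do the work. First, in $I^1_{z_\eta,\delta}(\phi,x_\epsilon)-I^1_{z_\eta,\delta}(\psi,y_\epsilon)$ the drift (linear-in-$t$) contributions of the two penalizations cancel \emph{exactly}, so this difference equals $\tfrac1\epsilon\int_{-\delta}^{\delta}|t|^{1-2s}\,dt+O(\gamma\delta^{2-2s})$, which vanishes as $\delta\to0$. Second, the integrand of $I^2_{z_\eta,\delta}(u^g,x_\epsilon)-I^2_{z_\eta,\delta}((v_\tau)_g,y_\epsilon)$ equals $\Phi_{\epsilon,\gamma}(x_\epsilon+tz_\eta,y_\epsilon+tz_\eta)-\Phi_{\epsilon,\gamma}(x_\epsilon,y_\epsilon)\le0$ plus the localization error $\gamma\big[(\rho(x_\epsilon+tz_\eta)-\rho(x_\epsilon))+(\rho(y_\epsilon+tz_\eta)-\rho(y_\epsilon))\big]$; integrating the latter against $|t|^{-1-2s}$, its $O(|t|)$-growth is absorbed near $t=0$ by oddness together with a second-order Taylor bound and far away by the boundedness of $u^g$ and $(v_\tau)_g$, so the whole difference is $o_\gamma(1)$, uniformly in $s\in(0,1)$.

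Putting these together, $\tfrac1\epsilon\int_{-\delta}^{\delta}|t|^{1-2s}\,dt+O(\gamma\delta^{2-2s})+o_\gamma(1)\ge f(x_\epsilon)-f(y_\epsilon)+\tau\kappa_0-\eta$. Letting $\delta\to0$, then $\eta\to0$, then $\epsilon\to0$ (so $f(x_\epsilon)-f(y_\epsilon)\to0$ by continuity of $f$ and $x_\epsilon,y_\epsilon\to\hat x$), and finally $\gamma\to0$, we obtain $0\ge\tau\kappa_0>0$, a contradiction. Hence $u\le v_\tau$ in $\RR^N$ for every $\tau\in(0,M)$, and $\tau\to0$ gives $u\le v$ in $\RR^N$. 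The delicate ingredients, as indicated, are extracting the strictly positive gain $\tau\kappa_0$ from the nonlocality (which has no local counterpart — adding a constant does nothing to $\lambda_1(D^2u)$) and controlling the nonlocal term across the doubling uniformly in $s$, which is precisely why the operator must be split into a near part tested with $\phi$ and a far part kept on $u$ and $v_\tau$, and why the penalization is localized.
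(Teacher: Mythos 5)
Your argument is correct in its essentials, but it reaches the contradiction by a genuinely different mechanism than the paper. In the paper's proof the doubling is performed on the compact set $\overline{\Omega}\times\overline{\Omega}$, near-optimal directions $z_{\varepsilon,h}$ are extracted and assumed to converge to some $z_0$, the nonlocal term is split into an interior part (controlled by the doubling inequality) and an exterior part, and the strictly negative quantity $-M\int_{L_{z_0}(\bar x)\cap(\R^N\setminus\Omega)}|t|^{-1-2s}\,dt$ is obtained only in the limit, via a.e.\ convergence of the indicators $\mathbf{1}_{L_{z_{\varepsilon,h}}(\cdot)\cap\Omega}$ (this is where strict convexity is invoked a second time) and dominated convergence. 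You instead convert $v$ into a \emph{strict} supersolution $v+\tau$, exploiting that $(v+\tau)_g=g$ outside $\Omega$ while the value at the center is shifted by $\tau$, which yields the margin $\tau\kappa_0$ with $\kappa_0=\int_{|t|>\operatorname{diam}\Omega}|t|^{-1-2s}dt$ \emph{uniformly in the direction $z$ and in $\delta$}; this makes any limit procedure in the directions unnecessary and uses only boundedness of $\Omega$ at that point (strict convexity still enters, as in the paper, only through Theorem \ref{condicion-borde.77} to push the doubled maximum into $\Omega$). The price you pay is the global doubling over $\R^N\times\R^N$ with the coercive term $\gamma\rho$, whose contribution must then be absorbed in the far-field part of $I^2$. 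Both proofs are correct; yours is arguably cleaner in the direction-handling step and closer to the standard "strictness by constant perturbation" trick, which, as you note, is a purely nonlocal effect.

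One detail you should make explicit: for $s\le\tfrac12$ the bound $\rho(x+tz)-\rho(x)=O(|t|)$ gives an integrand $O(\gamma|t|^{-2s})$ that is \emph{not} integrable at infinity, so "far away by the boundedness of $u^g$ and $(v_\tau)_g$" needs a quantitative splitting: bound the error by $C\gamma|t|^{-2s}$ on $1\le|t|\le R$ and by $C|t|^{-1-2s}$ on $|t|>R$, then choose $R=R(\gamma)\to\infty$ (e.g.\ $R=\gamma^{-1/2}$) so the total is $o_\gamma(1)$ uniformly in $\delta,\epsilon,z$; alternatively replace $\rho$ by a sublinear coercive penalization such as $\gamma\log(1+|x|^2)$, whose increments are integrable against $|t|^{-1-2s}$ for every $s\in(0,1)$. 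With that made precise (and dropping the inessential claim of uniformity in $s$, which is fixed), the proof is complete.
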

	\begin{proof}
		Define
		\[
			M\coloneqq\sup \Big\{ u(x)-v(x)\colon x\in \overline{\Omega} \Big\}.
		\]
		As usual, we argue by contradiction, that is, we assume that $M>0.$
		Since $u$ and $v$ are upper and lower semicontinuous functions, 
		\[
			S\coloneqq \sup\Big\{u(x)\colon x\in\overline{\Omega} \Big\}-\inf 
			\Big\{v(x)\colon x\in\overline{\Omega} \Big\}<\infty.
		\]
		
		For any $\varepsilon>0,$ we define
		\[
			\Psi_\varepsilon(x,y)\coloneqq u(x)-v(y)-\dfrac{\|x-y\|^2}\varepsilon.
		\] 
		Observe that
		\begin{equation}\label{eq:cp1}
			M\le M_\varepsilon\coloneqq\sup \Big\{\Psi_\varepsilon(x,y)\colon (x,y)\in\Omega\times\Omega\Big\}
			\le S.
		\end{equation}
		Moreover, $M_{\varepsilon_1}\le M_{\varepsilon_2}$ for all $\varepsilon_1\le\varepsilon_2.$ Then, there
		exists the limit
		\begin{equation}\label{eq:cp2}
			\lim_{\varepsilon \to 0^+} M_\varepsilon=\overline{M}.
		\end{equation}
		
		On the 	other hand, since $u$ and $-v$ are upper semicontinuous functions, for any $\varepsilon$,
		$\Psi_\varepsilon$ is an upper semicontinuous function. Thus, there is 
		$(x_\varepsilon,y_\varepsilon)\in\overline{\Omega}\times\overline{\Omega}$ such that
		\begin{equation}\label{eq:xeye}
			M_\varepsilon=\Psi_\varepsilon(x_\varepsilon,y_\varepsilon).
		\end{equation}
		Observe that
		\[
			M_{2\varepsilon}\ge \Psi_{2\varepsilon}(x_\varepsilon,y_\varepsilon)=
			\Psi_{\varepsilon}(x_\varepsilon,y_\varepsilon)+
			\dfrac{\|x_\varepsilon-y_{\varepsilon}\|^2}{2\varepsilon}=M_\varepsilon+
			\dfrac{\|x_\varepsilon-y_{\varepsilon}\|^2}{2\varepsilon}
		\]
		implies
		\[
			\dfrac{\|x_\varepsilon-y_{\varepsilon}\|^2}{\varepsilon}\le 
			2(M_{2\varepsilon}-M_{\varepsilon})\to 0
		\]
		as $\varepsilon\to 0^+.$ Therefore, we have
		\begin{equation}\label{eq:cp3}
			\lim_{\varepsilon \to 0^+} \dfrac{\|x_\varepsilon-y_{\varepsilon}\|^2}{\varepsilon}=0.
		\end{equation}
		
		Since $\overline{\Omega}$ is compact, extracting a subsequence if necessary, we can assume that
		\begin{equation}\label{eq:cp4}
			\lim_{\varepsilon \to 0^+} (x_\varepsilon,y_\varepsilon)\to ({\bar x},{\bar y})\in
			\overline{\Omega}\times\overline{\Omega}.
		\end{equation}
		Moreover, by \eqref{eq:cp3}, ${\bar x}={\bar y}$ and
		\[
			M\le \overline{M}=\lim_{\varepsilon\to0^+}\Psi_\varepsilon(x_\varepsilon,y_\varepsilon)
			\le\limsup_{\varepsilon\to0^+} u(x_\varepsilon)-v(x_\varepsilon)\le
			u({\bar x})-v({\bar x})\le M.
		\]
		Thus $M=u({\bar x})-v({\bar x}),$  and by Theorem \ref{condicion-borde.77}, 
		${\bar x}\in\Omega.$ Consequently, we may assume (without loss of generality) that
		\[
			d_\varepsilon=\min \Big\{\dist(x_\varepsilon), 
			\dist(y_\varepsilon )\Big\}>\frac{\dist({\bar x})}2>0
		\]
		provided $\varepsilon$ is small enough.
		
		On the other hand, by \eqref{eq:xeye}, for any $w\in \RR^N$ such that 
		 we have $(x_\varepsilon+w,y_\varepsilon+w)\in\overline{\Omega}\times\overline{\Omega}$ we have that
		\begin{equation}\label{eq:uvpositive}
			0\le u(x_\varepsilon)-u(x_\varepsilon+w)
			-v(y_\varepsilon)+v(y_\varepsilon+w).
		\end{equation}
		So
		\begin{align*}
			\phi_\varepsilon(x)&\coloneqq 
			v(y_\varepsilon)
			+\dfrac{\|x-y_\varepsilon\|^2}{\varepsilon},\\[6pt]
			\varphi_\varepsilon(y)&\coloneqq
			u(x_\varepsilon)
			-\dfrac{\|x_\varepsilon-y\|^2}{\varepsilon} 
		\end{align*}
		are test functions for $u$ and $v$ at $x_\varepsilon$ and $y_\varepsilon,$ respectively. Then
		\[
			E_\delta (u^g, \phi_\varepsilon, x_\varepsilon)\leq 0,\quad 
			\mbox{and}\quad E_\delta (v_g, \varphi_{\varepsilon}, y_\varepsilon)\geq 0 
		\]
		for all $\delta\in(0,d_\varepsilon).$
		Therefore, by the definition of $E_\delta,$ 
		for each $h>0$ there exists $z_{\varepsilon,h}\in \mathbb{S}^{N-1} $ such that
		\begin{equation}\label{EE}
			E_{(z_{\varepsilon,h}),\delta} (u^g, \phi_\varepsilon, x_\varepsilon)\geq 0 ,\quad 
			\mbox{and}
			\quad  E_{(z_{\varepsilon,h}),\delta}(v_g, \varphi_{\varepsilon}, y_\varepsilon)\leq h
		\end{equation}
		for any $\delta\in(0,d_\varepsilon).$

		Now, our goal is to obtain upper estimates for each term in the difference
		\[
			E_{(z_{\varepsilon,h}),\delta}(u^g, \phi_\varepsilon, x_\varepsilon)- 
			E_{(z_{\varepsilon,h}),\delta} (v_g, \varphi_{\varepsilon}, y_\varepsilon)
		\]
		for $0<\delta<\tfrac{\dist(\bar x )}2.$ We can 
		assume that $$z_{\varepsilon,h} \to z_0,$$
		taking a subsequence 
		if necessary. 		
		
		Let us write
		\begin{equation}\label{EE.89}
			\begin{array}{l}
				\displaystyle 
					E_{(z_{\varepsilon,h}),\delta} (u^g, \phi_\varepsilon, x_\varepsilon)
						=I_1(\delta,\varepsilon,h)+I_2(\delta,\varepsilon,h)+
						I_3(\varepsilon,h)-f(x_\varepsilon) \\[15pt]
				\displaystyle 
					 E_{(z_{\varepsilon,h}),\delta} (v_g, \varphi_{\varepsilon}, y_\varepsilon)
					=J_1(\delta,\varepsilon,h)+J_2(\delta,\varepsilon,h)+
					J_3(\varepsilon,h)-f(y_\varepsilon),
			\end{array}
		\end{equation}
		where
		\[
			\begin{array}{ll}
				\displaystyle 
				I_1(\delta,\varepsilon,h)\coloneqq \int_{-\delta}^\delta 
					\frac{
						\phi_{\varepsilon}(x_{\varepsilon}+t z_{\varepsilon,h})-
							\phi_{\varepsilon}(x_{\varepsilon})}{|t|^{1+2s}}dt,
					& 
					\displaystyle
					J_1(\delta,\varepsilon,h)\coloneqq \int_{-\delta}^\delta \frac{\varphi_{\varepsilon}
					(y_{\varepsilon}+t z_{\varepsilon,h})-
					\varphi_{\varepsilon, \mu}(y_\varepsilon)}{|t|^{1+2s}}dt,
					\\[15pt]
				\displaystyle 
					I_2(\delta,\varepsilon,h)
					\coloneqq \int_{A_\delta^{z_{\varepsilon,h}}(x_\varepsilon)} 
					\frac{u(x_{\varepsilon}+t z_{\varepsilon,h})-u(x_\varepsilon)}{|t|^{1+2s}}dt,
				& \displaystyle
					J_2(\delta,\varepsilon,h)
					\coloneqq\int_{A_\delta^{z_{\varepsilon,h}}(y_\varepsilon)} 
				\frac{v(y_\varepsilon+t z_{\varepsilon,h})-v(y_\varepsilon)}{|t|^{1+2s}}dt,
				\\[15pt]
				\displaystyle
				I_3(\varepsilon,h)
				\coloneqq \int_{\R\setminus L_{z_{\varepsilon,h}}(\bar x)} 
				\frac{g(x_{\varepsilon}+t z_{\varepsilon,h})-u(x_\varepsilon)}{|t|^{1+2s}}dt,
				&
				\displaystyle
				J_3(\varepsilon,h)
				\coloneqq
				\int_{\R\setminus
				L_{z_{\varepsilon,h}}(y_\varepsilon)} \frac{g(y_\varepsilon 
				+t z_{\varepsilon,h})-v(y_\varepsilon)}{|t|^{1+2s}}dt,
			\end{array}
		\]
		where $A_\delta^{z}(x)\coloneqq \{t\in \mathbb{R}\colon x+tz\in\Omega\} 
		\setminus (-\delta,\delta).$
		
		We first observe that there is a positive constant $C$ independent of $\delta,$ 
		$\varepsilon$ and $h$ such that
		\[
			\max \Big\{|I_1(\delta,\varepsilon,h)|, |J_1(\delta,\varepsilon,h)|\Big\}
			\le C\dfrac{\delta^{2-2s}}{\varepsilon}.
		\]

		Let $L_z(x)$ denote the line that passes trough $x$ with direction $z$, that is,
		$L_z (x) := x+tz$, $t \in \mathbb{R}$. 
		For the estimate of $I_2(\delta,\varepsilon,h)-J_2(\delta,\varepsilon,h),$ 
		we use that $z_{\varepsilon,h} \to z_0$ and that $\Omega$ is strictly convex to get,
		\[
			\mathbf{1}_{L_{z_{\varepsilon,h}}(y_\varepsilon)\cap \Omega}, \mathbf{1}_{L_{z_{\varepsilon,h}}
			(x_\varepsilon)\cap \Omega} \to 
			\mathbf{1}_{L_{z_0}(\bar x) \cap \Omega}\quad \mbox{ a.e. as }\quad\varepsilon,h \to 0.
		\]
		Then, since $u$ and $v$ are bounded, 
		by \eqref{eq:uvpositive} and \eqref{eq:cp4}, 
		and the dominated convergence theorem, we have
		\[
			\limsup_{\delta,\varepsilon,h\to0}
			I_2(\delta,\varepsilon,h)-J_2(\delta,\varepsilon,h)\le 0.
		\]

		Finally, we observe that, using again that  $z_{\varepsilon,h} \to z_0$, the strictly convexity of $\Omega$, 
		and the dominated convergence theorem, we have
		\[
			\mathbf{1}_{L_{z_{\varepsilon,h}}(x_\varepsilon) \cap (\mathbb{R}^N \setminus \Omega)}, 
			\mathbf{1}_{L_{z_{\varepsilon,h}}(y_\varepsilon)\cap (\mathbb{R}^N \setminus \Omega)} 
			\to \mathbf{1}_{L_{z_0}(\bar x)\cap (\mathbb{R}^N \setminus \Omega)} \quad\mbox{ a.e. as }\quad\varepsilon,h \to 0,
		\]
		and hence, using that $g$ is a bounded continuous function, we obtain
		\[
			\lim_{\varepsilon,h\to0}J_3(\varepsilon,h)-I_3(\varepsilon,h)
			\to -M \int_{L_{z_0}(\bar x)\cap (\mathbb{R}^N \setminus \Omega)} \frac{dt}{|t|^{1+2s}}, \quad \mbox{ a.e.}
		\]

		Therefore, 
		from our previous estimates we obtain, letting first $\delta \to 0$, then $\varepsilon \to 0 $, 
		and $h \to 0,$ we get
		\begin{equation}\label{eq:final}
		\begin{array}{l}
		\displaystyle
			I_1(\delta,\varepsilon,h)-J_1(\delta,\varepsilon,h)+I_2(\delta,\varepsilon,h)-
			J_2(\delta,\varepsilon,h)+
						I_3(\varepsilon,h)-J_3(\varepsilon,h)+f(y_\varepsilon)-f(x_\varepsilon) \\[6pt]
						\displaystyle \qquad 
						\to -M \int_{L_{z_0}(\bar x)\cap (\mathbb{R}^N \setminus \Omega)}
						\frac{dt}{|t|^{1+2s}}.
						\end{array}
		\end{equation}

		On the other hand, from \eqref{EE}, we get
		{\begin{align*}
		- h &\leq E_{(z_{\varepsilon,h}),\delta}(u^g, \phi_\varepsilon, x_\varepsilon)- 
			E_{(z_{\varepsilon,h}),\delta} (v_g, \varphi_{\varepsilon}, y_\varepsilon)
			\\[6pt]
			&=I_1(\delta,\varepsilon,h)-J_1(\delta,\varepsilon,h)+I_2(\delta,\varepsilon,h)-
			J_2(\delta,\varepsilon,h)+
						I_3(\varepsilon,h)-J_3(\varepsilon,h)+f(y_\varepsilon)-f(x_\varepsilon)
		\end{align*}}
		and therefore, letting first $\delta \to 0$, then $\varepsilon \to 0 $, and  
		$h \to 0$ we conclude that
		\[
			0\le -M \int_{L_{z_0}(\bar x)\cap (\mathbb{R}^N \setminus \Omega)} \frac{dt}{|t|^{1+2s}}<0,
		\]
		a contradiction.
\end{proof}

\subsection{Existence and uniqueness of a solution}

	Now our goal is to show existence and uniqueness of a solution to
	\begin{equation} \label{convex-330} 
		\begin{cases}
			\Lambda_1^s u (x) = 0 & x \in \Omega, \\[6pt]
					u (x) = g (x) & x \in \mathbb{R}^N\setminus \Omega,
		\end{cases}
	\end{equation}
	using Perron's method.

	\begin{theorem}\label{EyU.77.99} 
		Assume that $g\in C(\R^N\setminus\overline{\Omega})$ is bounded 
		and $\Omega$ is a {bounded} strictly convex $C^2-$domain.  
		Then, there is a unique viscosity solution $u$ to 
		\eqref{convex-330}, in the sense of Definition \ref{defsol.44}.
		This solution is continuous in $\overline{\Omega}$ and the datum $g$ is taken with continuity, 
		that is, $u|_{\partial \Omega} = g|_{\partial \Omega}$.
	\end{theorem}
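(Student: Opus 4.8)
The plan is to run Perron's method in the nonlocal viscosity framework of \cite{BarChasImb,Barles-Imbert}, exploiting the comparison principle (Theorem \ref{teo:cp}) and the boundary analysis (Theorem \ref{condicion-borde.77}) that are already at our disposal. Uniqueness is then immediate: if $u_1,u_2$ solve \eqref{convex-330} (the case $f\equiv0$ of \eqref{convex-33}), each is at once a sub- and a supersolution, so Theorem \ref{teo:cp} gives $u_1\le u_2$ and $u_2\le u_1$ in $\mathbb{R}^N$, hence $u_1=u_2$.

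For existence I would first exhibit an ordered pair of global barriers. Because $\Lambda_1^s$ annihilates constants and, inside $E_{z,\delta}$, a constant value on $\overline{\Omega}$ only picks up the (nonnegative, resp.\ nonpositive) contribution of $g$ along the part of the line lying outside $\Omega$, the constant functions $\underline u\equiv-\|g\|_{\infty}$ and $\overline u\equiv\|g\|_{\infty}$ are, respectively, a viscosity sub- and a supersolution of \eqref{convex-330} in the sense of Definition \ref{defsol.44}, with $\underline u\le\overline u$ and $\underline u\le g\le\overline u$ in $\mathbb{R}^N\setminus\overline\Omega$. I then set
\[
	u(x)\coloneqq\sup\big\{\,w(x)\colon w\text{ is a viscosity subsolution of }\eqref{convex-330}\text{ with }w\le\overline u\text{ in }\mathbb{R}^N\,\big\}.
\]
This family is nonempty (it contains $\underline u$) and every competitor is trapped between the two barriers, so $u$ is well defined, bounded, and satisfies $u\le g$ in $\mathbb{R}^N\setminus\overline\Omega$.

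Next I would check, following the standard nonlocal Perron scheme, that the upper semicontinuous envelope $u^*$ is a subsolution and the lower semicontinuous envelope $u_*$ is a supersolution. The first is the stability of subsolutions under suprema: if $\phi\in C^2(\mathbb{R}^N)$ touches $u^*$ from above at $x_0$, one picks competitors $w_k$ and points $x_k\to x_0$ realizing the envelope and passes to the limit in $E_\delta(w_k^g,\phi,x_k)\le0$, the nonlocal tails being controlled by dominated convergence since everything stays between $\underline u$ and $\overline u$ and $g$ is continuous. The second is the usual bump argument: were $u_*$ not a supersolution at some $x_0\in\overline\Omega$, one could add a small $C^2$ perturbation supported near $x_0$ to $u$ and produce a subsolution lying strictly above $u$ at some point while still below $\overline u$, contradicting the maximality of $u$; the one point needing care is that such a perturbation alters the nonlocal term globally, but since its support and size can be taken arbitrarily small and $\Lambda_1^s$ depends continuously on the perturbation through the tails, the subsolution inequality persists. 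The relaxed conditions on $\partial\Omega$ in Definition \ref{defsol.44} are handled exactly as in the interior.

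Finally, $u^*$ is a bounded subsolution, $u_*$ a bounded supersolution, and $u_*\le u^*$ trivially, so Theorem \ref{teo:cp} forces $u^*\le u_*$; hence $u^*=u_*=u$, which shows $u\in C(\overline\Omega)$ and that $u$ is a viscosity solution of \eqref{convex-330}. Being simultaneously sub- and supersolution, $u=g$ in $\mathbb{R}^N\setminus\overline\Omega$; and applying Theorem \ref{condicion-borde.77} with $u$ in the role of subsolution gives $u\le g$ on $\partial\Omega$, while applying it with $u$ in the role of supersolution gives $u\ge g$ on $\partial\Omega$, so $u|_{\partial\Omega}=g|_{\partial\Omega}$, and together with the continuity of $u$ on $\overline\Omega$ the exterior datum is attained continuously. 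The genuinely delicate analytic inputs — the comparison principle and the continuous attainment of the datum in a strictly convex domain — are already established (Theorems \ref{teo:cp} and \ref{condicion-borde.77}); the remaining obstacle is purely the Perron bookkeeping, and within it the verification that the localized bump does not spoil the subsolution property because of the global nonlocal tail, which is resolved by shrinking the bump, following \cite{BarChasImb,Barles-Imbert}.
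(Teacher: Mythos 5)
Your proposal is correct in outline, but it takes a genuinely different route from the paper. You run the classical Perron scheme directly on the Dirichlet problem: constant barriers $\pm\|g\|_\infty$, the supremum of subsolutions, stability of subsolutions under suprema for the upper envelope, a bump argument for the lower envelope (with the relaxed boundary condition of Definition \ref{defsol.44} treated like the interior case), and then the comparison principle (Theorem \ref{teo:cp}) to collapse $u^*=u_*$ and Theorem \ref{condicion-borde.77} to get $u|_{\partial\Omega}=g|_{\partial\Omega}$. The paper instead avoids re-proving the Perron lemmas in the presence of the generalized Dirichlet condition: it approximates by obstacle problems $H_k(x,u)=\min\{u-\psi_-^k,\max\{u-\psi_+^k,-\Lambda_1^s u\}\}=0$ posed in all of $\R^N$ (so the ready-made Perron theorem of \cite{Barles-Imbert} applies, since condition $(E)$ and comparison hold), lets the obstacles blow up inside $\Omega$, and passes to half-relaxed limits $\bar u,\underline u$, which are sub/supersolutions equal to $g$ outside $\Omega$; comparison then gives $\bar u=\underline u$. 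Your route is more direct but shifts the burden onto the two Perron lemmas in the nonlocal Dirichlet setting (in particular the verification that the localized bump preserves the subsolution property despite the global tails, and the boundary-point case of the relaxed condition), which you only sketch by appeal to \cite{BarChasImb,Barles-Imbert}; the paper's route trades this for the extra obstacle approximation but imports Perron as a black box and gets $u=g$ outside $\Omega$ for free from $\psi_\pm^k=g$ there. One small point to tighten in your argument: the claim that the limit solution equals $g$ in $\R^N\setminus\overline\Omega$ is not automatic from \textquotedblleft sub- and supersolution\textquotedblright\ since the supersolution requirement $u_*\ge g$ outside must itself be verified for the Perron supremum; this is easily fixed by including in your admissible class a subsolution equal to $g$ outside $\overline\Omega$ (e.g.\ $g$ outside and $-\|g\|_\infty$ in $\overline\Omega$, suitably upper semicontinuous), so that the supremum is $\ge g$ there, hence $=g$.
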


	\begin{proof} To obtain the existence of 
		a viscosity solution to our problem \eqref{convex-330}, 
		we again use ideas from \cite{BarChasImb}

		The existence of viscosity subsolution and supersolution 
		of \eqref{convex-330} in $\Omega$, in the sense of Definition \ref{defsol.44}, 
		follows easily taking $\pm  \|g\|_\infty $ 
		(here we are using that $g$ is bounded). 
		 
		We now consider a one-parameter family of continuous functions 
		$\psi_{\pm}^k\colon \R^N \to \R$ such that 
		\[ 
			\psi_+^k \geq \psi_-^k \text{ in } \R^N \text{ and }
			 \psi^k_{\pm} = g \text{ in } \R^N\setminus\Omega.
		\]
		Then for all $k\in\N$ we consider the obstacle problem 
		\begin{equation}\label{obstacle}
			H_k(x, u)\coloneqq \min \left\{ u(x) - \psi_-^k(x), 
			\max \{ u(x)-\psi_+^k(x), -\Lambda_1^s u (x)  \} \right\} =0, 
			\quad x \in \R^N,
		\end{equation}
		which is degenerate elliptic (that is, it satisfies the general assumption 
		$(E)$ of \cite{Barles-Imbert}). That also has  $\pm  \|g\|_\infty $ as viscosity 
		supersolution and  subsolution ( independent on the $L^\infty$ 
		bounds of $\psi^k_{\pm}$). 
		Then, in view of the general Perron's method given in \cite{Barles-Imbert} 
		for problems in $\R^N$, since condition $(E)$  and comparison holds, 
		we conclude the existence of a continuous bounded
		viscosity solution $u_k$ to~\eqref{obstacle} for each $k.$ 
		In addition, 
		\begin{equation}\label{obstacle_aux}
			\|u_k\|_\infty\le \|g\|_\infty \text{ and } u_k=g \text{ in }
			\R^N\setminus\Omega
		\end{equation}
		for all $k$. 

		Then, we consider $\psi_{\pm}^k$ in such a way $\psi_{\pm}^k(x) \to \pm \infty$ as $k \to \infty$ 
		for all $x \in \Omega$ and denoting
		\begin{equation*}
			\bar u(x) = \limsup \limits_{k \to \infty, y \to x} u_k(y); \quad 
			\underline u(x) = \liminf \limits_{k \to \infty, y \to x} u_k(y),
		\end{equation*}
		which are well defined for all $x \in \R^N.$
		Thus, we clearly have that 
		\[ 
			\bar u \geq \underline u \text{ in }\R^N.
		\]
		Thus, since $\psi^k_{\pm} = g$ in $\R^N\setminus\Omega$
		for any $k\in \mathbb{N}$, we have 
		\[ 
			\underline u = \bar u = g \text{ in } \R^N\setminus\Omega. 
		\]
		Moreover,  by the half-relax limite properties, see \cite{Barles-Imbert},
		$\bar u$ and $\underline u$ respectively viscosity sub and supersolution 
		to our problem \eqref{convex-330}. Thus, by comparison we get
		$$\bar u \leq \underline u$$ in $\R^N$, and therefore we conclude that $\bar u$ and $\underline u$
		coincide and that
		$$u:=\bar u = \underline u$$
		is a continuous viscosity solution that satisfies the boundary condition 
		in the classical sense.

		Uniqueness of solutions follows from the comparison principle.
\end{proof}

\section{$s-$convex functions}\label{sect.3}

	Our next aim is to show that $u$ is $s-$convex if only if 
	\[
		\Lambda^s_1 u(x)=\inf 
				\left\{ \int_{\mathbb{R}} 
					\frac{u(x+tz)-u(x)}{|t|^{1+2s}} 
					\, dt\colon z\in \mathbb{S}^{N-1}
				\right\}\ge0 \text{ in } \Omega
	\] 
	in the viscosity sense testing $u$ with $1-$dimensional functions in every segment inside $\Omega$.

	For this reason, we need to introduce a different definition of 
	viscosity subsolution of $\Lambda^s_1 u =0.$

	Let us start with the definition of viscosity solution of the fractional Dirichlet problem
	in one dimension,
	\begin{equation}
		\label{eq:sconvex.a}
		\left\{
			\begin{array}{ll}
				\Delta^s_1 w(t) = 0 &\text{in } (0,1),\\[6pt]
				 w(t)=g(t) &\text{in } \mathbb{R}\setminus(0,1),
			\end{array}
			\right.
	\end{equation}
	where $0<s<1,$ $g\in L_{loc}^\infty(\mathbb{R})\cap L_s(\mathbb{R})$ and 
	$\Delta^s_1$ denotes the fractional laplacian operators,
	\[
		\Delta^s_1 w(t)
		\coloneqq 
		\int_{\mathbb{R}} \frac{w(r)-w(t)}{|r-t|^{1+2s}} \, dr. 
	\]
	
	{A function $w\colon \mathbb{R}\to \mathbb{R}$
		\textit{viscosity subsolution (supersolution)} of \eqref{eq:sconvex.a} if its
		upper (lower) semicontinuous envelope $\tilde{w}$ ($\undertilde{w}$) 
		satisfies $\tilde{w}(t)\le  g(t)$ ($\undertilde{w}(t)\le  g(t)$) 
		for any $t\in\mathbb{R}\setminus(0,1);$ 
		and for any open interval $I\subset (0,1),$ any $t_0\in I$ and any test function 
		$\phi\in C^{2}(\mathbb{R})$ such that $\tilde{w}-\phi$
		($\undertilde{w}(t)\le  g(t)$)  attains a maximum (minimum) 
		at $t_0$ in $I,$ if we let
				\[
					\hat{\phi}(t)\coloneqq
					\begin{cases}
							\phi(t) &\text{if }t\in I,\\
							\tilde{w}(t) (\undertilde{w}(t)) 
							&\text{if }t\in \R\setminus I,
					\end{cases}
				\]
				 we have
				\[
					-\Delta^s_1 \hat{\phi}(t_0)\le (\ge) 0.
				\]
			
		A  function $w\colon \mathbb{R}\to \mathbb{R}$ is a {\it viscosity solution}
		of \eqref{eq:sconvex.a} if it is both a supersolution and a subsolution.
		}

	Given a function 
	\[
		u\in \mathcal{L}_s(\mathbb{R}^N)\coloneqq
			\{v\in L^\infty_{loc}(\mathbb{R}^N )\colon t\to v(x+tz)\in L_s(\mathbb{R})\,
			\forall x\in\Omega, \forall z\in\mathbb{S}^{N-1}\}	
	\]
		
		and two points $x,y\in \Omega$ with $x\neq y$, we 
		now introduce the definition of
		viscosity solution to
	\begin{equation}\label{eq:sconvex}
	\left\{
			\begin{array}{ll}
				\Delta^s_1 v(tx+(1-t)y)=0 & t\in (0,1);\\[6pt]
				 v(tx+(1-t)y)=u(tx+(1-t)y) & t\in\mathbb{R}\setminus (0,1).
			\end{array}
			\right.
	\end{equation}
	
	\begin{definition}
		A function $v$ is a \textit{viscosity subsolution} (\textit{supersolution}) of \eqref{eq:sconvex} if
		$$w(t)=v(tx+(1-t)y)$$ is a viscosity subsolution (supersolution) of 
		\[
		\left\{
			\begin{array}{ll}
				\Delta^s_1 w(t) = 0 & t \in (0,1),\\[6pt]
				 w(t)=v(tx+(1-t)y) & t\in \mathbb{R}\setminus (0,1).
			\end{array}
			\right.
		\]
		{ A function $v\colon \mathbb{R}\to \mathbb{R}$ is a {\it viscosity solution}
			of \eqref{eq:sconvex}  if it is both a supersolution and a subsolution.}
	\end{definition}
	
	Now we are in a position to rigorously state the definition of being $s-$convex.
	\begin{definition}
		Let $s\in(0,1),$
		a function $u\in \mathcal{L}_s(\mathbb{R}^N)$
		is said to be {\it $s-$convex} in $\Omega$ if
		for any two points $x,y \in \Omega$ such that the segment $[x,y]$ is contained in 
		$\Omega$ it holds that
		\begin{equation} \label{convexo-s.88}
			u(t x+(1-t)y) \leq v(t x+(1-t)y), \qquad \forall t \in (0,1)
		\end{equation}
		where $v$ is just the viscosity solution 
		of \eqref{eq:sconvex}. 
	\end{definition}
	
	Notice that in the two previous definitions we used $1-$dimensional test functions to test the solution $v$.

	Now, we are ready to state our second definition of being a solution to 
	$$\Lambda^s_1 u=\inf 
				\left\{ \int_{\mathbb{R}} 
					\frac{u(x+tz)-u(x)}{|t|^{1+2s}} 
					\, dt\colon z\in \mathbb{S}^{N-1}
				\right\} = 0.$$

	\begin{definition} \label{defi.2.5}
		A function $\mathfrak{u}\colon\mathbb{R}^N\to\mathbb{R}$  is a 
		\textit{viscosity subsolution} of 
		\begin{equation}\label{eq:Lu}
			\Lambda^s_1 \mathfrak{u}(x)=0 \text{ in } \Omega
		\end{equation} 
		if  for any $x\in \Omega,$ any  
		$z\in\mathbb{S}^{N-1},$ any open interval $I\ni0$ such that $x+tz\in\Omega$ for all $t\in I,$ 
		and any test function $\phi\in C^{2}(\mathbb{R})$ 
		such that $\phi(0)=\tilde{w}(0)$ and $\phi(t)\ge \tilde{w}(t)$ 
		($\phi(t)\le \undertilde{w}(t)$) in $I$ we have
		\[
			-\Delta^s_1 \hat{\phi}(0)\le 0 
		\]
		where 
		\[
			\hat{\phi}(t)=
					\begin{cases}
							\phi(t) &\text{if }t\in I,\\
							{ \tilde{w}(t) } &\text{if }t\in\R\setminus I,
					\end{cases}
		\]
		and $\tilde{w}(t)$ denotes the upper semicontinuous envelope of $w(t)=\mathfrak{u} (x+tz).$
		
		A function $\mathfrak{u}\colon\mathbb{R}^N\to\mathbb{R}$  is a 
		\textit{viscosity supersolution} of 
		\eqref{eq:Lu}
		if  for any $x\in \Omega,$ any $\phi \in C^{2}(\mathbb{R^N})$ 
		such that $\phi(x)= \undertilde{w}(x)$ and $\phi(y)\le \undertilde{w}(y)$ in $\mathbb{R}^N$ we have
$$
-\Delta^s_1 \hat{\phi}(x)\ge 0,
$$
and $\undertilde{w}(y)$ denotes the lower semicontinuous envelope of $\mathfrak{u}$ in $\mathbb{R}^N$.
		
		Finally, we say that $\mathfrak{u} $ a viscosity solution of \eqref{eq:Lu} when it is both a 
		viscosity subsolution and a viscosity supersolution of \eqref{eq:Lu}.
	\end{definition}
	
	Notice that in this definition we used $1-$dimensional test functions to test from above, while from below we test
	with $N-$dimensional functions. 
	Recall that in the introduction we discussed the differences between the two 
	definitions of viscosity solutions of \eqref{eq:Lu}. 
	
	From the previous definition, Definition \ref{defi.2.5}, of viscosity subsolution of \eqref{eq:Lu}, we deduce the next lemma that
	we state for future reference.
	
	\begin{lemma}\label{lemma:auxsconv1}
		A function $\mathfrak{u} \colon\mathbb{R}^N \mapsto \mathbb{R}$ is a viscosity subsolution 
		of \eqref{eq:Lu} in the sense of Definition \ref{defi.2.5} 
		if only if for any $x\in\Omega$ and any $z\in\mathbb{S}^{N-1}$ 
		we have that the function $w(t)= \mathfrak{u}(x+tz)$ is a viscosity subsolution of 
		\[
			\Delta^s_1 w(t)=0 \quad \text{ in } 
			I_z(x)\coloneqq\{t\in \mathbb{R}\colon x+tz\in\Omega\}.
		\]
		
	\end{lemma}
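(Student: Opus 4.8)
The plan is to prove the equivalence by unwinding the two definitions and exploiting two elementary facts: the translation invariance of the one–dimensional operator $\Delta^s_1$, and the observation that, since $\Omega$ is open (and convex), every point $x'\in\Omega$ may serve as the base point in Definition \ref{defi.2.5}. Throughout, for $x\in\Omega$ and $z\in\mathbb{S}^{N-1}$ I write $w_{x,z}(t)\coloneqq\mathfrak{u}(x+tz)$, recall that $I_z(x)=\{t\in\mathbb{R}\colon x+tz\in\Omega\}$ is an open interval, and understand the one–dimensional subsolution condition as referring to touching test functions, $\phi(t_0)=\widetilde{w_{x,z}}(t_0)$, consistently with the normalization already built into Definition \ref{defi.2.5} (recall that $\widetilde{w_{x,z}}$ denotes the upper semicontinuous envelope of $w_{x,z}$). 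In the statement ``$w_{x,z}$ is a viscosity subsolution of $\Delta^s_1 w=0$ in $I_z(x)$'' only the interior testing clause is involved, since no exterior datum is prescribed.

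One implication is immediate. Assume $w_{x,z}$ is a viscosity subsolution of $\Delta^s_1 w=0$ in $I_z(x)$ for every $x\in\Omega$ and every $z\in\mathbb{S}^{N-1}$. Fix such $x,z$, an open interval $I\ni0$ with $x+tz\in\Omega$ for all $t\in I$ (so $I\subset I_z(x)$, and $0\in I_z(x)$ since $x\in\Omega$), and a test function $\phi\in C^2(\mathbb{R})$ with $\phi(0)=\widetilde{w_{x,z}}(0)$ and $\phi\ge\widetilde{w_{x,z}}$ on $I$. Then $\widetilde{w_{x,z}}-\phi$ attains a maximum at $t_0=0$ in $I$, so the one–dimensional subsolution property of $w_{x,z}$, applied at $t_0=0$ with this interval, gives $-\Delta^s_1\hat\phi(0)\le0$, where $\hat\phi=\phi$ on $I$ and $\hat\phi=\widetilde{w_{x,z}}$ on $\mathbb{R}\setminus I$. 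This is precisely the inequality required in Definition \ref{defi.2.5}, so $\mathfrak{u}$ is a viscosity subsolution of \eqref{eq:Lu}.

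For the converse, assume $\mathfrak{u}$ is a viscosity subsolution of \eqref{eq:Lu} in the sense of Definition \ref{defi.2.5}. Fix $x\in\Omega$, $z\in\mathbb{S}^{N-1}$, an open interval $I\subset I_z(x)$, a point $t_0\in I$, and a test function $\phi\in C^2(\mathbb{R})$ with $\phi(t_0)=\widetilde{w_{x,z}}(t_0)$ and $\widetilde{w_{x,z}}-\phi$ attaining a maximum at $t_0$ in $I$. Set $x'\coloneqq x+t_0z\in\Omega$ (this uses $t_0\in I\subset I_z(x)$), $I'\coloneqq I-t_0$, and $\phi'\coloneqq\phi(t_0+\cdot)\in C^2(\mathbb{R})$. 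Since $w_{x',z}(t)=\mathfrak{u}(x+(t_0+t)z)=w_{x,z}(t_0+t)$ and the upper semicontinuous envelope commutes with translations, $\widetilde{w_{x',z}}(t)=\widetilde{w_{x,z}}(t_0+t)$; hence $I'$ is an open interval containing $0$ with $x'+tz\in\Omega$ for all $t\in I'$, $\phi'(0)=\widetilde{w_{x',z}}(0)$, and $\phi'\ge\widetilde{w_{x',z}}$ on $I'$. Applying Definition \ref{defi.2.5} at $x'$ along the direction $z$ with interval $I'$ and test function $\phi'$ yields $-\Delta^s_1\widehat{\phi'}(0)\le0$, where $\widehat{\phi'}=\phi'$ on $I'$ and $\widehat{\phi'}=\widetilde{w_{x',z}}$ outside. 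A direct check gives $\widehat{\phi'}(\cdot)=\hat\phi(t_0+\cdot)$, with $\hat\phi=\phi$ on $I$ and $\hat\phi=\widetilde{w_{x,z}}$ on $\mathbb{R}\setminus I$, and the change of variables $\rho=t_0+r$ in the principal value integral gives $\Delta^s_1\widehat{\phi'}(0)=\Delta^s_1\hat\phi(t_0)$. Therefore $-\Delta^s_1\hat\phi(t_0)\le0$, which is exactly the condition for $w_{x,z}$ to be a viscosity subsolution of $\Delta^s_1 w=0$ in $I_z(x)$.

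No step is a serious obstacle; the points to handle with care are the translation invariance of $\Delta^s_1$ on the glued function $\hat\phi$, the commutation of the upper semicontinuous envelope with translations, and the conceptual heart of the matter: the single-point condition at parameter $0$ in Definition \ref{defi.2.5} already encodes the whole family of one–dimensional test conditions, precisely because re-centering $\Omega$ at any of its points $x'=x+t_0z$ is permitted in that definition.
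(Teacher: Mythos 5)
Your proof is correct, and it is exactly the argument the paper has in mind: the paper states Lemma \ref{lemma:auxsconv1} without proof as an immediate consequence of Definition \ref{defi.2.5}, and your verification is the expected unwinding of the definitions, the only real content being the recentering $x'=x+t_0z$ (allowed since $x'\in\Omega$) together with translation invariance of $\Delta^s_1$ and of the upper semicontinuous envelope, which converts the test condition at parameter $0$ in Definition \ref{defi.2.5} into the test condition at an arbitrary $t_0\in I_z(x)$. Your normalization remark (reading the one--dimensional ``maximum of $\tilde w-\phi$'' clause as touching, $\phi(t_0)=\tilde w(t_0)$) is the intended interpretation and is handled correctly.
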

	
	Next, recalling our definition of being $s-$convex let us look at a simple example.
	
	\begin{example} Let us present a simple explicit example of a function that is $s-$convex in a $1-$dimensional
	interval.
		Let $u\colon \mathbb{R}\to \mathbb{R}$
		
		\begin{minipage}{0.4\textwidth}
				    \[
					     u(t)\coloneqq
					            \begin{cases}
					                -(1-t^2)^s & \text{if } t\in[-1,1],\\[6pt]
					                0 & \text{if } |t|>1.
					            \end{cases}
					\]
			\end{minipage}
			\begin{minipage}{0.4\textwidth}
				\begin{center}
					    \begin{tikzpicture}
				            \draw[<->,scale=0.7] (-2,0) -- (2,0) node[right] {$t$};
				            \draw[<->,scale=0.7] (0,-1.5) -- (0,1.5) node[left] {$y$};
						    \draw[thick, blue,rounded corners=1mm,scale=0.7] (-2,0)--(-1,0);
					        \draw[thick, blue,rounded corners=1mm,scale=0.7] (1,0)--(2,0);
					        \draw (1,.5) node[left] {$u(t)$};
					        \draw[domain=-1:1,smooth,variable=\x,blue,scale=0.7] 
					        plot ({\x},{-1* (1-\x*\x)^(1/3)});
				        \end{tikzpicture}
			  	\end{center}
			\end{minipage}
			
		By \cite{Dyda},  we have that
				\[
					\Delta_1^s u(t)=\Gamma(2s+1) \quad \text{in }(0,1). 
				\]
		Then, given two points in $x,y\in (-1,1),$
		\[
			\Delta_1^s u(tx+(1-t)y)=|x-y|^{2s}\Gamma(2s+1)\quad \text{ for } t \in (0,1). 
		\]
		Thus, by the maximum principle, 
			if $v$ the viscosity solution of \eqref{eq:sconvex}
		then 
		\[ 
			u(tx+(1-t)y)\le v(tx+(1-t)y)
		\]
		for any $t\in(0,1).$ We conclude that $u$ is $s-$convex in $(-1,1)$.
		
		Notice that the same arguments show that every viscosity solution to 
		\[
			\Delta_1^s u(t)\geq 0 \quad \text{in } (-1,1)
		\]
		is $s-$convex in $(-1,1)$.
	\end{example}

	Now, we are ready to show one of the main results of this section.

	\begin{theorem}\label{theorem:Luconvex1}
		Let $u\in\mathcal{L}_s(\mathbb{R}^n)$ be a viscosity subsolution to
		\begin{equation}\label{eq:Lu.44}
			\Lambda^s_1 u(x)= 0 \quad \text{ in } \Omega,
		\end{equation}  
		in the sense of Definition \ref{defi.2.5} then $u$ is $s-$convex in $\Omega.$
	\end{theorem}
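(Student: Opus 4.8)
The plan is to reduce the $N$-dimensional statement to a one-dimensional comparison along each segment $[x,y]\subset\Omega$: by Lemma~\ref{lemma:auxsconv1} the restriction of $u$ to any line is a viscosity subsolution of the one-dimensional fractional Laplacian inside $\Omega$, and one concludes by the comparison (maximum) principle for the one-dimensional fractional Dirichlet problem \eqref{eq:sconvex.a}, exactly as in the Example above.

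Concretely, I would fix $x,y\in\Omega$ with $[x,y]\subset\Omega$ and set $\gamma(t)=tx+(1-t)y$, $z=(x-y)/|x-y|\in\mathbb{S}^{N-1}$, $L=|x-y|$, and $J:=\{t\in\mathbb{R}\colon\gamma(t)\in\Omega\}$, which is open and contains $(0,1)$ since $\Omega$ is open and $[x,y]\subset\Omega$. By Lemma~\ref{lemma:auxsconv1}, the function $w(r):=u(y+rz)$ is a viscosity subsolution of $\Delta^s_1 w=0$ in $I_z(y)$. Since $\gamma(t)=y+(Lt)z$, so that $W(t):=u(\gamma(t))=w(Lt)$, and since the notion of viscosity subsolution of $\Delta^s_1 w=0$ is invariant under the affine reparametrization $r=Lt$ of the line (the operator only picks up the positive factor $L^{2s}$, and $C^2$ test functions are sent to $C^2$ test functions), $W$ is a viscosity subsolution of $\Delta^s_1 W=0$ in $J$, hence in $(0,1)$.

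Then I would let $v$ be the viscosity solution of \eqref{eq:sconvex} and set $V(t):=v(\gamma(t))$. By the definition of \eqref{eq:sconvex}, $V$ is a viscosity solution --- in particular a supersolution --- of $\Delta^s_1 V=0$ in $(0,1)$, and $V(t)=u(\gamma(t))=W(t)$ for $t\notin(0,1)$, so $W$ and $V$ are a sub- and a supersolution of the one-dimensional Dirichlet problem with the same exterior datum. The comparison principle for that problem then yields $W\le V$ in $(0,1)$, that is, $u(tx+(1-t)y)\le v(tx+(1-t)y)$ for every $t\in(0,1)$; as $x,y$ were arbitrary with $[x,y]\subset\Omega$, this is precisely the definition of $u$ being $s$-convex in $\Omega$.

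The genuine analytic input is the comparison principle for the one-dimensional fractional Laplacian, which is classical; the rest is bookkeeping. The one point needing a short verification is the invariance of the one-dimensional viscosity notion under the affine change of parameter $r=Lt$, which is what allows Lemma~\ref{lemma:auxsconv1} (stated for unit directions through a fixed base point) to be applied to the affine parametrization $\gamma$ appearing in the definition of $s$-convexity; the usual care with upper and lower semicontinuous envelopes in the comparison step is routine, using that $W\le\tilde W$ so that the inequality for $W$ follows from the one for $\tilde W$.
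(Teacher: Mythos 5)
Your argument is correct and follows essentially the same route as the paper: apply Lemma \ref{lemma:auxsconv1} to see that the restriction of $u$ to the line through $x$ and $y$ is a viscosity subsolution of the one-dimensional fractional Laplacian, and then conclude $u\le v$ on the segment by the one-dimensional comparison (maximum) principle against the solution $v$ of \eqref{eq:sconvex}. The only difference is that you make explicit the affine reparametrization $r=Lt$ and the factor $L^{2s}$, a step the paper's proof uses implicitly without comment.
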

	
	\begin{proof}
		Let $x,y\in\Omega$ and $v$ be the viscosity solution of \eqref{eq:sconvex} 
		(that is well defined due to the fact that $u\in\mathcal{L}_s(\mathbb{R}^N)$). Notice that
		\[
			tx+(1-t)y=t |x-y|z+y \quad\text{with } z=\dfrac{x-y}{|x-y|}\in\mathbb{S}^{N-1}.	
		\]
		Then, by Lemma \ref{lemma:auxsconv1}, we have that $w(t)=u(y+t |x-y| z)$  is a viscosity 
		subsolution of  
		\[
			\Delta^s_1 w(t)=0 \quad \text{ in } (0,1).
		\] 
		Thus, by the maximum principle, we get
		\[
			u(tx+(1-t)y)= w(t)\le v(tx+(1-t)y)\quad \forall t\in(0,1).
		\]
		Therefore, we conclude that $u$ is $s-$convex.
	\end{proof}	
	
	Now, we prove the reciprocal result. 

	\begin{theorem}\label{theorem:Luconvex2}
		Let $u\colon\mathbb{R}^N\to\mathbb{R}$ 
		be $s-$convex in $\Omega.$  
		Then, $u$ is a viscosity subsolution to 
		\begin{equation}\label{eq:Lu.55}
			\Lambda^s_1 u(x)= 0 \quad \text{ in } \Omega
		\end{equation} 
		in the sense of Definition \ref{defi.2.5}.
	\end{theorem}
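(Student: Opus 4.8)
The plan is to reduce the claim to a one-dimensional statement and then argue by contradiction, confronting the $s$-convexity inequality with the comparison principle for the $1$-dimensional fractional laplacian (this is the converse of Theorem \ref{theorem:Luconvex1}). By Lemma \ref{lemma:auxsconv1} it is enough to show that for every $x\in\Omega$ and every $z\in\mathbb{S}^{N-1}$ the function $w(t)\coloneqq u(x+tz)$ is a viscosity subsolution of $\Delta^s_1 w=0$ in $I_z(x)=\{t\in\mathbb{R}\colon x+tz\in\Omega\}$, in the one-dimensional sense used there. Since $u$ is $s$-convex it lies in $\mathcal{L}_s(\mathbb{R}^N)$, so $w\in L_s(\mathbb{R})$ is locally bounded, $I_z(x)$ is a bounded open set, and all the nonlocal integrals below make sense; as usual everything is phrased in terms of the upper semicontinuous envelope $\tilde w$ of $w$.

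Next I would set up the contradiction. Fix an open interval $I\subset I_z(x)$, a point $t_0\in I$ and a test function $\phi\in C^{2}(\mathbb{R})$ with $\phi\ge\tilde w$ on $I$ and $\phi(t_0)=\tilde w(t_0)$, and set $\hat\phi\coloneqq\phi$ on $I$ and $\hat\phi\coloneqq\tilde w$ on $\mathbb{R}\setminus I$. Assume, towards a contradiction, that $\Delta^s_1\hat\phi(t_0)<0$. Since $\phi\in C^{2}$ and $\tilde w\in L_s(\mathbb{R})$, the function $t\mapsto\Delta^s_1\hat\phi(t)$ is continuous on $I$: split the principal value integral into the part over $I$ (controlled by $\|\phi''\|_\infty$) and the tail over $\mathbb{R}\setminus I$ (handled by dominated convergence, since the distance from a compact subset of $I$ to $\mathbb{R}\setminus I$ is positive). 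Hence there is an open interval $(a,b)$ with $t_0\in(a,b)$ and $[a,b]\subset I$ such that $\Delta^s_1\hat\phi(t)<0$ for all $t\in(a,b)$; since $\hat\phi=\phi\in C^{2}$ on $I\supset(a,b)$, $\hat\phi$ is a classical, hence viscosity, supersolution of $\Delta^s_1=0$ in $(a,b)$.

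Now I would use $s$-convexity. Let $\bar v$ be the viscosity solution of $\Delta^s_1\bar v=0$ in $(a,b)$ with exterior datum $\bar v=w$ on $\mathbb{R}\setminus(a,b)$ (well defined because $w$ is locally bounded and in $L_s(\mathbb{R})$). Because $[a,b]\subset I\subset I_z(x)$, the closed segment $\{x+tz\colon t\in[a,b]\}$ lies in $\Omega$, so applying the definition of $s$-convexity to its endpoints (after the obvious affine change of parameter, which does not affect the zero set of $\Delta^s_1$) gives $w(t)\le\bar v(t)$ for all $t\in(a,b)$, whence $w\le\bar v$ on all of $\mathbb{R}$; moreover $\hat\phi\ge\tilde w\ge w=\bar v$ on $\mathbb{R}\setminus(a,b)$ (equal to $\phi\ge\tilde w$ on $I\setminus(a,b)$ and to $\tilde w$ on $\mathbb{R}\setminus I$). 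By the comparison principle for $\Delta^s_1$ on $(a,b)$ ($\hat\phi$ a supersolution, $\bar v$ a solution, $\hat\phi\ge\bar v$ outside) we get $\hat\phi\ge\bar v$ on $(a,b)$, hence $\phi=\hat\phi\ge\bar v$ on the open interval $I$; as $\phi$ is continuous this forces $\phi\ge\widetilde{\bar v}$ on $I$, and together with $\widetilde{\bar v}\ge\tilde w$ (from $\bar v\ge w$) and $\phi(t_0)=\tilde w(t_0)$ we obtain $\widetilde{\bar v}(t_0)=\phi(t_0)$. Thus $\phi$ touches $\widetilde{\bar v}$ from above at $t_0$ in $(a,b)$.

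Finally I would close the loop: since $\bar v$ is in particular a viscosity subsolution of $\Delta^s_1\bar v=0$ in $(a,b)$, the previous step gives $\Delta^s_1\hat\psi(t_0)\ge0$, where $\hat\psi\coloneqq\phi$ on $(a,b)$ and $\hat\psi\coloneqq\widetilde{\bar v}$ on $\mathbb{R}\setminus(a,b)$. But $\hat\psi(t_0)=\phi(t_0)=\hat\phi(t_0)$, while $\hat\psi\le\hat\phi$ on $\mathbb{R}$: both equal $\phi$ on $(a,b)$, and off $(a,b)$ one checks, using $[a,b]\subset I$ and $\bar v=w$ there, that $\widetilde{\bar v}\le\hat\phi$. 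Therefore $\Delta^s_1\hat\psi(t_0)\le\Delta^s_1\hat\phi(t_0)<0$, a contradiction. Hence $\Delta^s_1\hat\phi(t_0)\ge0$, i.e. $-\Delta^s_1\hat\phi(t_0)\le0$, so $w$ is a viscosity subsolution of $\Delta^s_1 w=0$ in $I_z(x)$, and Lemma \ref{lemma:auxsconv1} then yields that $u$ is a viscosity subsolution of \eqref{eq:Lu.55} in the sense of Definition \ref{defi.2.5}. The hard part will be exactly this interplay of the \emph{localization} step with the \emph{semicontinuous-envelope bookkeeping} near $\partial(a,b)$: it is the strict inclusion $[a,b]\subset I$ — available only because $I$ may be shrunk around $t_0$ using the continuity of $\Delta^s_1\hat\phi$ — that makes both $\phi\ge\widetilde{\bar v}$ on $I$ and $\widetilde{\bar v}\le\hat\phi$ off $(a,b)$ hold; the comparison principle for the one-dimensional fractional laplacian with bounded $L_s$ exterior data that is invoked here is classical.
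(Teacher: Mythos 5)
Your argument is correct and follows essentially the same route as the paper's proof: argue by contradiction, use the continuity of $t\mapsto\Delta^s_1\hat\phi(t)$ to shrink to a subinterval where the glued test function is a strict classical supersolution, solve the $1$-dimensional Dirichlet problem there, invoke $s$-convexity to get $w\le\bar v$, and compare $\bar v$ with $\hat\phi$. The only (inessential) difference is the final step: the paper concludes via the strong maximum principle, getting the strict inequality $\bar v(t_0)<\hat\phi(t_0)$ which contradicts $\tilde w(t_0)\le \bar v(t_0)$ and $\hat\phi(t_0)=\tilde w(t_0)$, whereas you test the subsolution property of $\bar v$ with $\phi$ and use monotonicity of $\Delta^s_1$ to contradict $\Delta^s_1\hat\phi(t_0)<0$.
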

		
	\begin{proof}
		We argue by contradiction. Assume that there are $x\in \Omega,$ 
		$z\in\mathbb{S}^{N-1},$
		an open interval $I\ni0$ such that 
		$x+tz\in\Omega$ for all $t\in I,$ let $\tilde{w}(t) = u(x + tz)$,
		and let $\hat \phi\in C^{2}(\mathbb{R})$ be 
		a test function such that $\hat\phi(0)=\tilde{w}(0)$ and 
		$\phi(t)\ge \tilde{w}(t)$ 
		in $I$ such that
		\[
			\Delta^s_1 \hat{\phi}(0)< 0.
		\]
		
		As $u(x+tz)\in L_{s}(\mathbb{R}),$ by \cite[Lemma 3.8]{kkl}, we have that 
		$\Delta^s_1 \hat{\phi}(t)$ is a continuous function. Therefore, there is
		$\delta>0$ such that 
		\[
			\Delta^s_1 \hat{\phi}(t)< 0 \text{ in } (-\delta,\delta)\subset I	.
		\]
		
		We now take, $x_0=x+\delta z,$  $y_0=x-\delta z.$  If $v$ is 
		the viscosity solution of
		\[
		\left\{
			\begin{array}{ll}
				\Delta^s_1 v(tx_0+(1-t)y_0)=0 &\text{in } t\in (0,1),\\[5pt]
				 v(tx_0+(1-t)y_0)=u(tx_0+(1-t)y_0) &\text{in } t \in\mathbb{R}\setminus (0,1),
			\end{array}
			\right.
		\]
		then $v(tx_0+(1-t)y_0)\in C^\infty(0,1)$ (see, for instance \cite{ross}) and
		\[
			u(tx_0+(1-t)y_0)\le v(tx_0+(1-t)y_0) \text{ in } (0,1)
		\]
		due to the fact that $u$ is $s-$convex. Therefore,
		\begin{equation}\label{eq:wtilde}
			\tilde{w}(0)=\inf_{r>0}\sup \Big\{u(tx_0+(1-t)y_0)\colon t\in(-r,r)\Big\}\le v(x).
		\end{equation} 
		
		On the other hand,
		\[
			z(t)=v\left(\frac{t+\delta}{2\delta}x_0 +\left(1-\frac{t+\delta}{2\delta}\right)y_0\right)
		\] 
		is a viscosity solution to
		\[
		\left\{
			\begin{array}{ll}
				\Delta^s_1 z(t)=0 &\text{in } t\in (-\delta,\delta),\\[6pt]
				 z(t)=u(x+tz) &\text{in } t \in\mathbb{R}\setminus (0,1).
			\end{array}
			\right.
		\]
		Thus, by the strong maximum principle, 
		(that holds just by evaluating the operator in a maximum (or minimum) point),
		we have that
		\[
			 z(t)< \hat{\phi }(t)\quad \forall t\in(-\delta,\delta).
		\]
		Therefore,  using \eqref{eq:wtilde}, we have that
		\[
			\tilde{w}(0)\le v(x)=z(0)<\hat{\phi }(0)=\tilde{w}(0)
		\]
		a contradiction. This finishes the proof. 
	\end{proof}
		
		Notice that we have proved that $u\colon\mathbb{R}^N\to\mathbb{R}$ 
		is $s-$convex in $\Omega$ {\it if and only if}
		$u$ is a viscosity subsolution to 
		\begin{equation}\label{eq:Lu.55.99}
			\Lambda^s_1 u(x)= 0 \quad \text{ in } \Omega
		\end{equation} 
		in the sense of Definition \ref{defi.2.5}. This fact will be stated as Lemma \ref{caract.usc} in Section \ref{sectTeo11}.
	
\section{Classical convexity vs $s-$convexity}\label{sec:convexvssconvex}
	In this section we study when a convex function turns out to be a $s-$convex function. 
	
	\begin{proposition}\label{pop:convex_implies_sconvex}
		Let $s>\tfrac12.$
		If $u$ is a convex function in $\mathbb{R}^N$  such that
		$t \mapsto u(x+tz)\in L_{s}(\mathbb{R})$ for any $x\in \mathbb{R}^N$ 
		and any $z\in\mathbb{S}^{N-1},$ 
		then $u$ is $s-$convex in $\mathbb{R}^N.$ 
	\end{proposition}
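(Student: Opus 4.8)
The plan is to deduce the result from Theorem \ref{theorem:Luconvex1}: since that theorem says that a viscosity subsolution of $\Lambda_1^s u=0$ in the sense of Definition \ref{defi.2.5} is $s$-convex, it suffices to check that every convex $u$ (with the stated $L_s$ integrability of its one-dimensional slices) is such a subsolution. First I would record the elementary facts that a convex $u\colon\mathbb{R}^N\to\mathbb{R}$ is locally Lipschitz, hence locally bounded, so that together with the hypothesis it belongs to $\mathcal{L}_s(\mathbb{R}^N)$; and that for every $x\in\mathbb{R}^N$, $z\in\mathbb{S}^{N-1}$ the slice $w(t):=u(x+tz)$ is convex on $\mathbb{R}$, hence continuous, so that $\tilde w=w$. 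Then I would fix $x$, $z$, an open interval $I\ni0$ and a test function $\phi\in C^2(\mathbb{R})$ with $\phi(0)=w(0)$ and $\phi\ge w$ on $I$, set $\hat\phi:=\phi$ on $I$ and $\hat\phi:=w$ on $\mathbb{R}\setminus I$, and aim to prove $-\Delta_1^s\hat\phi(0)\le0$ (the exterior inequality $u\le g$ being vacuous since $\Omega=\mathbb{R}^N$).

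The key point is that $\hat\phi\ge w$ on all of $\mathbb{R}$, with equality at $0$: on $I$ this is the touching hypothesis $\phi\ge w$, and off $I$ one has $\hat\phi=w$. Hence $\hat\phi(r)-\hat\phi(0)\ge w(r)-w(0)$ for every $r\ne0$, so for each $\varepsilon>0$
\[
\int_{|r|>\varepsilon}\frac{\hat\phi(r)-\hat\phi(0)}{|r|^{1+2s}}\,dr
\ \ge\ \int_{|r|>\varepsilon}\frac{w(r)-w(0)}{|r|^{1+2s}}\,dr
\ =\ \int_{\varepsilon}^{\infty}\frac{w(\rho)+w(-\rho)-2w(0)}{\rho^{1+2s}}\,d\rho\ \ge\ 0,
\]
where the last inequality is exactly nonnegativity of the second differences of the convex function $w$. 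Letting $\varepsilon\to0$, the left-hand side converges to the principal value $\Delta_1^s\hat\phi(0)$, and this limit is finite: near $0$, $\hat\phi$ equals the $C^2$ function $\phi$, so the odd part of $\phi(r)-\phi(0)$ cancels by symmetry and the even, $O(r^2)$, part is integrable against $|r|^{-1-2s}$ because $s<1$, while the tail is absolutely convergent since $\hat\phi\in L_s(\mathbb{R})$ (it coincides with $w\in L_s(\mathbb{R})$ outside $I$). Therefore $\Delta_1^s\hat\phi(0)\ge0$, i.e. $-\Delta_1^s\hat\phi(0)\le0$, which is the required subsolution inequality of Definition \ref{defi.2.5}. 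By Theorem \ref{theorem:Luconvex1} we conclude that $u$ is $s$-convex in $\mathbb{R}^N$. (Equivalently, one could invoke Lemma \ref{lemma:auxsconv1} and check directly that each slice $w$ is a one-dimensional viscosity subsolution of $\Delta_1^s w=0$; the computation is identical.)

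I do not expect a serious obstacle: the only delicate points are bookkeeping, namely justifying that $\Delta_1^s\hat\phi(0)$ is well defined and that one may pass to the limit $\varepsilon\to0$ (handled, as in \cite{kkl}, by the $C^2$ regularity of the test function at the contact point together with the $L_s$ membership of the slices), and observing that the domination $\hat\phi\ge w$ holds on the whole line and not merely on $I$. Finally, it is worth noting that the hypothesis $s>\tfrac12$ is not used directly in the argument; its role is to make the statement non-vacuous, since a nonconstant convex function on $\mathbb{R}^N$ grows at least linearly along some line, and a function of linear growth lies in $L_s(\mathbb{R})$ only when $1+2s>2$.
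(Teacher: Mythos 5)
Your argument is correct, but it takes a genuinely different route from the paper's. The paper proves the proposition directly from the definition of $s$-convexity: for each $t_0$ it takes the affine support function of the convex slice, observes that an affine function of $t$ satisfies $\Delta^s_1 w=0$ --- this is precisely where $s>\tfrac12$ enters, since for $s\le\tfrac12$ an affine function is not in $L_s(\mathbb{R})$ and the operator is not defined --- and then compares this $s$-harmonic minorant with the solution $v$ of \eqref{eq:sconvex} to conclude $u(t_0x+(1-t_0)y)=w(t_0x+(1-t_0)y)\le v(t_0x+(1-t_0)y)$. You instead verify that $u$ is a viscosity subsolution of $\Lambda^s_1 u=0$ in the sense of Definition \ref{defi.2.5}, using the nonnegativity of the symmetric second differences $w(\rho)+w(-\rho)-2w(0)$ of the convex slices, and then invoke Theorem \ref{theorem:Luconvex1}; your justification of $\Delta^s_1\hat\phi(0)\ge 0$ (domination $\hat\phi\ge w$ on all of $\mathbb{R}$ with equality at $0$, symmetrization, tails controlled by the $L_s$ hypothesis, $C^2$ regularity of $\phi$ at the contact point) is sound. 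What each approach buys: yours isolates the slightly stronger pointwise fact that classical convexity yields $\Lambda^s_1 u\ge0$ in the viscosity sense and makes the restriction $s>\tfrac12$ enter only through the integrability hypothesis (indeed, for $s\le\tfrac12$ that hypothesis forces $u$ to be constant, so the statement is merely trivial there, rather than ``vacuous'' as you put it); the paper's proof is more elementary and self-contained, avoiding the characterization of Theorem \ref{theorem:Luconvex1} altogether and exhibiting the explicit $s$-harmonic affine competitor. One small caveat in your version: Theorem \ref{theorem:Luconvex1} is stated under the paper's standing assumption that $\Omega$ is a bounded strictly convex $C^2$ domain, whereas here $\Omega=\mathbb{R}^N$; this is harmless --- any segment lies in a large ball $B_R$, your subsolution property holds in every $B_R$, and the proof of Theorem \ref{theorem:Luconvex1} (Lemma \ref{lemma:auxsconv1} plus the one-dimensional maximum principle on segments) uses neither boundedness nor strict convexity --- but it deserves a sentence in a complete write-up.
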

	\begin{proof}
		Fix $x,y\in \mathbb{R}^N,$ and $t\in(0,1),$ 
		since $u$ is a convex function in $\mathbb{R}^N,$
		there is $\xi\in\mathbb{R}^N$ such that
		\begin{equation}\label{eq:aux1}
				\xi \cdot (x-y) (t-t_0)+u(t_0x+(1-t_0)y)
			\le u (tx+(1-t)y)
		\end{equation}
		for all $t\in \mathbb{R}.$
		
		Fix $t_0\in(0,1)$, we take 
		\[
			w(tx+(1-t)y)\coloneqq\xi \cdot (x-y) (t-t_0)+u(t_0x+(1-t_0)y)
		\]
		for any $t\in\mathbb{R}.$ Since $w$ is a  function that is affine with respect to $t$ and $s>\tfrac{1}{2}$ we have
		that
		\begin{equation}\label{eq:aux2}
			\Delta^s_1 w (tx+(1-t)y)=0 \quad \forall t\in\mathbb{R}.
		\end{equation}
		
		Thus, by \eqref{eq:aux1} and \eqref{eq:aux2}, we have
		\begin{equation}\label{eq:sconvex2}
		\left\{
			\begin{array}{ll}
				\Delta^s_1 {w}(tx+(1-t)y)= 0 &\text{for all } t\in (0,1);\\[5pt]
				 w(z)\le u(z) &\text{for all } z=tx+(1-t)y  \text{ with }t\not\in (0,1).
			\end{array}
			\right.
		\end{equation}
		Therefore, 
		if $v$ is the viscosity solution of \eqref{eq:sconvex},  
		then by comparison
		\[
			w(tx+(1-t)y)\le v(tx+(1-t)y) \quad\forall t\in(0,1).
		\] 
		In particular 
		\[
			 v(t_0x+(1-t_0)y)\ge w(t_0x+(1-t_0)y)=u(t_0x+(1-t_0)y).
		\]
		
		As $t_0$ is arbitrary, we conclude $u$ is $s-$convex.
	\end{proof}
	
	In the next example, we show that a classical convex function in an interval is
	 not necessarily $s-$convex. This holds because being $s-$convex depends on the values of the function in the whole space, while for being convex only the values inside the domain matter.  
	
	\begin{example}
		Let $u\colon\mathbb{R}\to\mathbb{R}$ be given by
		
		\begin{minipage}{0.4\textwidth}
				    \[
					     u(t)\coloneqq
					            \begin{cases}
					                (t-3)^2 & \text{if } t\in[2,4],\\[6pt]
					                1 & \text{if } |t-3|>1.
					            \end{cases}
					\]
			\end{minipage}
			\begin{minipage}{0.5\textwidth}
				\begin{center}
					    \begin{tikzpicture}[scale=.5]
				            \draw[<->,scale=0.7] (-3,0) -- (6,0) node[right] {$t$};
				            \draw[->,scale=0.7] (0,0) -- (0,3) node[left] {$y$};
						    \draw[thick, blue,rounded corners=1mm,scale=0.7] (-3,1)--(2,1);
					        \draw[thick, blue,rounded corners=1mm,scale=0.7] (4,1)--(6,1);
					        \draw (2,1.3) node[left] {$u(t)$};
					        \draw[domain=2:4,smooth,variable=\x,blue,scale=0.7] plot ({\x},{(\x-3)*(\x-3)});
				        \end{tikzpicture}
			  	\end{center}	
		\end{minipage}
		
		Observe that $u$ is a convex function in $[-1,1]$ (it holds that $u\equiv 1$ in $[-1,1]$).
		
		On the other hand, for any $x,y\in[-1,1],$  
		if $v$ is the viscosity solution of \eqref{eq:sconvex},  
		by the strong maximum principle we have that
		\[
			1\ge v(tx+(1-t)y) \quad\forall t\in(0,1).
		\] 
		Therefore $u$ is not $s-$convex.
	\end{example}
	
	Finally, we also have that the converse does not hold. We present an $s-$convex function in an interval 
	that is not convex.

	\begin{example}
		Let $u\colon\mathbb{R}\to\mathbb{R}$ be the solution to
				\begin{equation}\label{eq:sconvex2.99}
		\left\{
			\begin{array}{ll}
				\Delta^s_1 {u}(z)= 0 &\text{for all } z\in (0,1);\\[5pt]
				 u(z)= g(z) &\text{for all } z \in \mathbb{R}\setminus (0,1)
			\end{array}
			\right.
		\end{equation}
		with $g$  bounded smooth with 
		$g(0)=g(1)=1$, $g\geq 1$ with at least one $x_0\in \mathbb{R}\setminus(0,1)$ 
		such that $g(x_0)>1$.
			
		Observe that $u$ is a $s-$convex function in $(0,1)$. 
		This follows since $u$ solves $\Delta^s_1 {u}= 0$ in $(0,1)$. 
		In fact, take $v$ the solution
		to $\Delta^s_1 {v}= 0$ inside an interval $[x,y] \subset (0,1)$ with exterior 
		Dirichlet datum $u$, as $u$ 
		solves $\Delta^s_1 {u}= 0$ in the interval $[x,y]$ with $u$ as exterior datum, 
		by uniqueness of solutions
		to the $1-$d fractional laplacian, we have $v=u$ in $[x,y]$ (in particular, 
		$u\leq v$ in $[x,y]$) showing that $u$ is $s-$convex.
		
		On the other hand, $u$ is smooth, continuous up to the boundary and, by the 
		strong maximum principle, it holds that 
		\[
			1< u(x) \quad\forall x\in(0,1)
		\] 
		together with $u(0)=u(1)=1$.
		Therefore, $u$ is not convex in $(0,1)$.
	\end{example}

\section{$s-$convex envelope} \label{sectTeo11}

\subsection{ Definition of the $s-$convex envelope }

	Let us call $H(g)$ the set of $s-$convex functions that are below $g$ outside $\Omega$,
	\[
		H(g) \coloneqq 
		\Big\{u \colon u \mbox{ is $s-$convex in $\overline{\Omega}$ and verifies } u|_{\mathbb{R}^N\setminus \Omega} \leq g \Big\}.
	\]
	
	As a consequence of our previous results we have that functions in $H$ are subsolutions to our problem 	
	\eqref{convex-envelope-s-eq} in the sense of Definition \ref{defi.2.5}.
	In fact, as a direct consequence of Theorems \ref{theorem:Luconvex1} and \ref{theorem:Luconvex2}, 
	we have the following lemma.

	\begin{lemma} \label{caract.usc} 
		Let $u\in\mathcal{L}_s(\mathbb{R}^N).$
		Then, $u\in H(g)$ if only if $u$ is a viscosity solution to
		\begin{equation} \label{convex-envelope-eq}
			\begin{array}{ll}
				\Lambda_1^s u (x) \geq 0 \qquad & x \in \Omega, \\[6pt]
				u (x) \leq g (x) \qquad & x \in \mathbb{R}^N\setminus \Omega,
			\end{array}
		\end{equation}
		in the sense of Definition \ref{defi.2.5} (testing with $1-$dimensional functions on segments).
	\end{lemma}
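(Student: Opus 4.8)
The plan is to prove Lemma~\ref{caract.usc} as a more or less immediate corollary of the two equivalence results established just above, namely Theorems~\ref{theorem:Luconvex1} and \ref{theorem:Luconvex2}, combined with the bookkeeping built into the notion of viscosity sub/supersolution in Definition~\ref{defi.2.5}. The statement to be proved is an ``if and only if'' with two pieces of information on each side: on the left, membership in $H(g)$, which by definition means (a) $u$ is $s$-convex in $\overline{\Omega}$ and (b) $u|_{\mathbb{R}^N\setminus\Omega}\le g$; on the right, that $u$ is a viscosity solution of \eqref{convex-envelope-eq} in the sense of Definition~\ref{defi.2.5}, which unwinds to (a$'$) $u$ is a viscosity subsolution of $\Lambda_1^s u=0$ in $\Omega$ in that sense and (b$'$) $u\le g$ outside $\Omega$. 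So the strategy is simply to match (a)$\Leftrightarrow$(a$'$) and note (b)$\equiv$(b$'$).

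First I would observe that the exterior conditions (b) and (b$'$) are literally the same condition, so there is nothing to prove there; the content is entirely in the interior. Next, for the forward direction, suppose $u\in H(g)$. Then $u$ is $s$-convex in $\overline{\Omega}$, hence in particular $s$-convex in $\Omega$, so Theorem~\ref{theorem:Luconvex2} applies (note its hypothesis is only $u\colon\mathbb{R}^N\to\mathbb{R}$ $s$-convex in $\Omega$) and yields that $u$ is a viscosity subsolution of $\Lambda_1^s u=0$ in $\Omega$ in the sense of Definition~\ref{defi.2.5}; together with $u\le g$ outside $\Omega$ this is precisely \eqref{convex-envelope-eq}. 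For the reverse direction, suppose $u\in\mathcal{L}_s(\mathbb{R}^N)$ is a viscosity solution of \eqref{convex-envelope-eq} in the sense of Definition~\ref{defi.2.5}; in particular $u$ is a viscosity subsolution of \eqref{eq:Lu.44} in $\Omega$, so Theorem~\ref{theorem:Luconvex1} (whose hypothesis is exactly $u\in\mathcal{L}_s(\mathbb{R}^N)$ a viscosity subsolution in that sense) gives that $u$ is $s$-convex in $\Omega$, and with $u\le g$ outside $\Omega$ we conclude $u\in H(g)$.

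The one point that deserves a remark rather than being glossed over is the passage between ``$s$-convex in $\Omega$'' and ``$s$-convex in $\overline{\Omega}$'': the definition of $H(g)$ asks for $s$-convexity in $\overline{\Omega}$, while the two theorems are phrased for $\Omega$. Here the hypothesis that $\Omega$ is a bounded $C^2$-domain is used: a segment $[x,y]$ with $x,y\in\overline{\Omega}$ and $[x,y]\subset\overline{\Omega}$ is, up to its endpoints, an increasing limit of segments compactly contained in $\Omega$, and since the defining inequality \eqref{convexo-s.88} is closed under such limits (the solution $v$ of the one-dimensional fractional Dirichlet problem depends continuously on the segment and on the exterior datum, using that $u\in\mathcal{L}_s$ and the comparison principle for $\Delta_1^s$), being $s$-convex in $\Omega$ is equivalent to being $s$-convex in $\overline{\Omega}$ for functions in $\mathcal{L}_s(\mathbb{R}^N)$. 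I would state this equivalence in one sentence and refer back to the continuity/stability properties already used in the proof of Theorem~\ref{teo:cp}. This is the only step with any real content; everything else is a direct citation of Theorems~\ref{theorem:Luconvex1} and \ref{theorem:Luconvex2}. Accordingly the ``main obstacle'' is merely notational care in lining up the two definitions, and the proof is short.
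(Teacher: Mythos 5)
Your proposal is correct and follows essentially the same route as the paper, which likewise obtains the lemma as a direct combination of Theorems~\ref{theorem:Luconvex1} and \ref{theorem:Luconvex2} together with the observation that the exterior conditions coincide. Your extra remark on reconciling ``$s$-convex in $\Omega$'' with the ``$s$-convex in $\overline{\Omega}$'' appearing in the definition of $H(g)$ is a point the paper silently glosses over, and your one-sentence stability argument is a reasonable way to close that small gap.
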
	
	
	Recall that the $s-$convex envelope of an exterior 
	datum $g \colon \mathbb{R}^N\setminus \Omega \to  {\mathbb{R}}$ is given by
	\begin{equation} \label{convex-envelope-s.55}
		u^*(x)\coloneqq 
		\sup \Big\{w(x):w\in H(g)\Big\}.
		\end{equation}
		Notice that, from the definition of being $s-$convex, we have that
		$$
		t\to w(x+tz)\in L_{s}(\mathbb{R}), \quad \forall x\in \mathbb{R}^N,
		\forall z\in \mathbb{S}^{N-1} .
		$$
		
	Now we are ready to proceed with the proof of Theorem \ref{teo.1.intro}.

	\begin{proof}[Proof of Theorem \ref{teo.1.intro}]
			We split the proof in two steps.
			
			\noindent {\it Step 1}. First we show that $u^*(x)\in H(g).$
			To this end we only need to prove that $u^*$ is $s-$convex in $\Omega$ 
			because we have $
			w|_{\mathbb{R}^N\setminus \Omega} \leq g $ for every $w \in H(g)$. Since we have that 
			$s-$convexity is equivalent to be a viscosity subsolution to $\Lambda_1^s (u) =0$ in $\Omega$ this follows 
			easily from the fact that the supremum of subsolutions is also a subsolution. Below we include
			a proof for completeness. 
			
			Fix $x,y\in\Omega.$ For any $u\in H(g)$ we have that if $v_u$ is	
			viscosity solution of \eqref{eq:sconvex}
			then $u(tx+(1-t)y)\le v_u(tx+ty)$ in $(0,1).$
			Now, if $v$ is a viscosity solution to
			\[
			\left\{
				\begin{array}{ll}
					\Delta_1^s v(tx+(1-t)y)=0 & t\in (0,1);\\[6pt]
					 v(tx+(1-t)y)=u^*(tx+(1-t)y) & t \in\mathbb{R}\setminus (0,1),
				\end{array}
				\right.
			\]
			by the maximum principle, we get,  
			$$u(tx+(1-t)y)\le v_u(tx + (1-t)y)\le v(tx + (1-t)y) \text{ in } (0,1),$$ that is,
			\[
				u(tx+(1-t)y)\le v(tx + (1-t)y) \text{ in } (0,1),
			\]
			for any $u\in H(g).$ Thus, taking supremum, we obtain
			\[
				u^*(tx+(1-t)y)\le v(tx + (1-t)y) \text{ in } (0,1).
			\]
			Finally, as $x,y\in\Omega$ are arbitrary, we have that $u^*$ is $s-$convex and
			therefore $u^*\in H(g)$			 
			
			\noindent {\it Step 2}. Now we show that the $s-$convex envelope, $u^*$, is a viscosity solution 
			(in the sense of Definition \ref{defi.2.5})
			of 
			\begin{equation} \label{convex-envelope-s-eq.99} \left\{
			\begin{array}{ll}
				\displaystyle \Lambda_1^s u (x) = 0 \quad & x \in \Omega \\[5pt]
				u (x) = g (x) \quad & x \in \mathbb{R}^N\setminus \Omega.
			\end{array}
			\right.
		\end{equation} 
			
			Since $u^*\in H(g),$ $u^*$ is $s-$convex in $\Omega$ and by Lemma \ref{caract.usc}, we know that $u^*$ is a subsolution of 
			\eqref{convex-envelope-s-eq.99} in the sense of Definition \ref{defi.2.5}. 
			
			On the other hand, to prove that $u^*$ is a supersolution of 
			\eqref{convex-envelope-s-eq.99} in the sense of Definition \ref{defi.2.5} we argue by contradiction. 
			
			Assume that $u^*$ is lower semicontinuous (otherwise replace $u^*$ by its lower semicontinuous envelope
			in what follows).
			If there is a smooth $N-$dimensional test function $\phi$ that touches $u^*$ from below at $x_0$
			(we can assume that $u^*-\phi$ has a strict minimum at $x_0$ with $u^*(x_0)=\phi (x_0)$) such that 
			$$
			\Lambda_1^s \phi (x_0) = c > 0,
			$$
			then we have that for every $x$ close to $x_0$
			it holds that 
			$$
			\Lambda_1^s \phi (x) \geq \frac{c}{2} > 0,
			$$
			and hence, for every $z$ such that $|z|=1$ we have that
			$$
			\Delta_1^s \phi (x+tz) |_{t=0} \geq \frac{c}{2} > 0
			$$
			(notice that this strict inequality holds for every $z$ since $\Lambda_1^s$ involves an infimum).
			
			Now
			we can modify our function $u^*$ on a small neighborhood of $x_0$, 
			taking
			$$
			\widehat{u} (x) = \max \{ u^* (x) ; \phi (x) + \delta \}.
			$$
			Notice that we have 
			$$
			\widehat{u} (x) \geq \phi (x) + \delta.
			$$
			Since $u^*-\phi$ has a strict minimum at $x_0$ with $u^*(x_0)=\phi (x_0)$ we have that 
			for $\delta$ small there is 
			a small neighborhood of $x$,
			$U$, such that 
			$$
			\widehat{u} (x) =
			\left\{
			\begin{array}{ll}
			\phi (x) + \delta \quad  & x\in U, \\[6pt]
			u^* (x) \quad & x\not\in U.
			\end{array}
			\right.
			$$
			
			Let us check that this function $\widehat{u} $ is $s-$convex. To this end, using 
			Lemma \ref{caract.usc} it is enough if we prove that  $\widehat{u} $ is a subsolution to 
			$ \Lambda_1^s u (x) = 0$ in the sense of Definition \ref{defi.2.5}. 
			Then, take a smooth $1-$dimensional test function $\psi$ that touches $\widehat{u} $
			from above
			at a point $x\in \Omega$ on some line $x+tz$, that is, we have
			$$
			\psi(t) \geq \widehat{u} (x+tz), \qquad \psi (0) = \widehat{u}(x).
			$$
			
			Now, if $x\in U$ we have
			$$
			\Delta_1^s \psi (0) \geq \Delta_1^s  \widehat{u} (x+tz) |_{t=0} \geq
			\Delta_1^s [\phi (x+tz) + \delta]|_{t=0} >0.
			$$
			On the other hand, if $x\not\in U$, we have that 
			$\widehat{u} (x) =u^*(x)$, $\widehat{u} \geq u^*$ and $\psi$ touches $\widehat{u}$ from above
			at $x$ on the line $x+tz$. Hence, we get that $\psi$ touches $\widehat{u}$ from above
			at $x$ on the line $x+tz$. Then, by {\it Step 1}, using that $u^*$ is $s-$convex
			we get 
			$$
			\Delta_1^s \psi (0) \geq 0,
			$$
			and this proves that $\widehat{u} $ is $s-$convex in $\Omega$.
			
			Since $ \widehat{u} $ coincides with $u^*$ outside $U$ and $u^* \leq g$ in $\mathbb{R}^N \setminus \Omega$
			we conclude that $\widehat{u} \in H(g)$. 
			
			As we have 
			$$
			u^*(x_0) = \phi(x_0) < \phi (x_0) + \delta = \widehat{u} (x_0) \leq \sup \Big\{w(x_0):w\in H(g)\Big\} = u^*(x_0)
			$$
			we arrive to a contradiction that proves that $u^*$
			is a supersolution to \eqref{convex-envelope-s-eq.99}.
			
			We have proved that the $s-$convex envelope $u^*$ is 
			a viscosity solution to \eqref{convex-envelope-s-eq.99} in the sense of Definition \ref{defi.2.5} (testing 
			with $1-$dimensional functions on segments).
			
			 Finally, we observe that every solution to 	
			 \eqref{convex-envelope-s-eq.99} in the sense of Definition \ref{defi.2.5}
			 is also a solution in the sense of Definition \ref{defsol.44}. From the uniqueness result for  
			 	\eqref{convex-envelope-s-eq.99}  proved in Section \ref{sect.2} (working with $N$-dimensional test functions)
				we conclude that the $s-$convex envelope $u^*$ is characterized as the unique viscosity solution to 
					\eqref{convex-envelope-s-eq.99} in the sense of Definition \ref{defsol.44}.
	\end{proof}
	
%
%
	Finally, let us present an example that shows that the strict convexity of the domain is needed
	to have continuity up to the boundary for any continuous data $g$ for the $s-$convex envelope inside $\Omega$.
	
	\begin{example}
		First, we recall that the fact that $\Omega$ is strictly convex is equivalent to the following property:
		Given $y\in\partial\Omega$ we have that for every
		$r>0$ there exists $\delta>0$ such that for every $x\in B(y,\delta)\cap \Omega$ and 
		every direction $z$ ($|z|=1$) 
		it holds that
		\begin{equation}
			\label{condG1} 
			\{x+tz\}_{t\in \R}\cap B(y,r)\cap \partial\Omega\neq\emptyset.
		\end{equation}
		See \cite{BlancRossi}.

		Therefore, when $\Omega$ is not strictly convex there exist a point 
		$y\in\partial\Omega,$ a radius
		 $r>0$ and a sequence of points $x_n \in \Omega$, $x_n \to y$, 
		 and directions $z_n$ ($|z_n|=1$) 
		 such that
		\begin{equation}
		\label{condG1.nop} 
		\{x_n+tz_n\}_{t\in \R}\cap B(y,r)\cap (\R^N\setminus\Omega)= \emptyset.
		\end{equation}

		Now, consider a nonnegative continuous datum $g$ in $\mathbb{R}^N \setminus \Omega$ such that
		$$ 
		\begin{array}{ll}
		g (x)\equiv 0 \qquad & x \in (\mathbb{R}^N \setminus \Omega) \setminus B(y,r), \\[6pt]
		g(y) =1.
		\end{array}
		$$
		Notice that such a function $g$ is necessarily bounded.

		For this $g$ take $u^*$ the $s-$convex envelope inside $\Omega$ (that is well defined since $w\equiv 0$ is
		$s-$convex and verifies $w\leq g$ in $ \mathbb{R}^N \setminus \Omega$).

		Now, at any point $x_n$ in the sequence we consider the direction $z_n$ such that \eqref{condG1.nop}
		holds. Then for any $u\in H(g)$ we have 
		$$
			u(x_n) \leq v(x_n) 
		$$
		with $v$ the solution to the $1-$dimensional fractional laplacian in the 
		line with direction $z_n$ an exterior 
		datum $g$. Then 
		$$
			u^*(x_n) \leq v(x_n) 
		$$
		As $g=0$ in $(\mathbb{R}^N \setminus \Omega) \setminus B(y,r)$ and the line
		$\{x_n+tz_n\}_{t\in \R}$ does not intersects $\mathbb{R}^N \setminus \Omega$ inside the ball
		$B(y,r)$ we get that the exterior condition for $v$ is identically zero, and hence $v(x_n)=0$.
		We conclude that $x_n \in \Omega \to y \in \partial \Omega$ with
		$$
		\limsup_n u^*(x_n) \leq 0 < 1= g(y).
		$$
		This shows that in this case the datum $g$ is not attained continuously. 
\end{example}

\begin{remark} \label{remarklll}
Notice that the definition of the $s-$convex envelope $u^*$ of an exterior continuous and bounded datum $g$ makes sense
	for every domain $\Omega$ (strictly convex or not) and that our previous arguments show that $u^*$ is the 
	largest viscosity solution to 
	$$
	\Lambda_1^s (u) = 0 \qquad \mbox{ in } \Omega
	$$
	with $u\leq g$ in $\mathbb{R}^N \setminus \Omega$.
\end{remark}

\section{Localized $s-$convexity} \label{sect-localiz}
	One can localize $s-$convexity in $\Omega$ 
	and use only values of $u$ inside the domain.  We say that 
	$u\in\mathcal{L}_s(\mathbb{R}^N)$  is {\it locally $s-$convex} in $\Omega$ if for every pair of 
points $x$, $y$ in $\Omega$ such that the segment that joins $x$ and $y$, $[x,y]$ is inside $\Omega$,  then, it holds that
\begin{equation} \label{convexo-s.99}
		u(t x+(1-t)y) \leq v(t x+(1-t)y), \qquad \forall t \in (0,1)
	\end{equation}
	where now $v$ is the viscosity solution 
	to 
	\[
		\Delta^s_{1,\Omega} v(t x+(1-t )y)
		\coloneqq \int_{\{ r: rx+(1-r)y \in \Omega \}} 
		\frac{v(rx+(1-r)y)-v(t x+(1-t)y)}{|r-t|^{1+2s}} \, dr = 0
	\]
	for every $t \in (0,1)$ with 
	\[
		v(z)=u(z) \qquad \mbox{ for } z=tx+(1-t)y \in \Omega \mbox{ with }t\not\in (0,1).
	\] 
Notice that we are restricting the integrals to the part of the line $L_{z}(x)$ with $z=\frac{y-x}{\|y-x\|}$ that is inside $\Omega$ and therefore we are using only values
of $u$ inside $\Omega$ to decide whether $u$ is $s-$convex. 
This strategy to localize inside $\Omega$ is similar to the one that one follows to 
define the restricted fractional laplacian,
$$
\Delta^s_{\Omega} v(x)
		\coloneqq \int_{ \Omega } \frac{v(y)-v(x)}{|x-y|^{N+2s}} \, dy.
$$

With this localized definition, provided that $s>1/2$ (to have a well-defined trace on $\partial \Omega$ along lines),
one can look at the $s-$convex envelope of a boundary datum $g$
defined on $\partial \Omega$. The equation for this $s-$convex envelope is like the previous one but now 
we take the infimum of $1-$dimensional $s-$laplacians integrating in the line intersected with the set $\Omega$,
$$
\displaystyle \Lambda_{1,\Omega}^{s} u (x) \coloneqq \inf 
				\left\{ \int_{L_z(x)\cap \Omega} 
					\frac{u(x+tz)-u(x)}{|t|^{1+2s}} 
					\, dt\colon z\in \mathbb{S}^{N-1}
				\right\} = 0.
$$

For $s>\frac{1}2$,
we can also define $s-$convexity asking
\begin{equation} \label{convexo-s.77}
		u(t x+(1-t)y) \leq v(t x+(1-t)y), \qquad \forall t \in (0,1)
	\end{equation}
	where now $v$ is the viscosity solution 
	to 
	\[
		\Delta^s_{1,[x,y]} v(t x+(1-t )y)
		\coloneqq \int_{s\in  (0,1)} \frac{v(sx+(1-s)y)-v(t x+(1-t)y)}{|s-t|^{1+2s}} \, ds = 0
	\]
	for every $t \in (0,1)$ with 
	\[
		v(x)=u(x), \qquad \mbox{and} \qquad x(y)=u(y).
	\] 
This definition of $s-$convexity only uses the values of $u$ at the endpoints of the segment
(remark that for $s>\frac1{2}$ we have a trace at the boundary of every segment inside $\Omega$). 
	
	With this slightly different localized definition one can also look at the $s-$convex envelope of a boundary datum $g$
defined on $\partial \Omega$. The equation for this $s-$convex envelope is like the previous one but now 
we take the infimum of $1-$dimensional $s-$laplacians integrating in the segment (connected component) of the line 
intersected with the set $\Omega$ that contains $x$,
$$
\displaystyle \widetilde{\Lambda}_{1,\Omega}^{s} u (x) \coloneqq \inf 
				\left\{ \int_{A_{z}(x)} 
					\frac{u(x+tz)-u(x)}{|t|^{1+2s}} 
					\, dt\colon z\in \mathbb{S}^{N-1}
				\right\} = 0,
$$
being
	\[
		A_{z}(x) = \left\{t\colon x+rz \in \Omega, \forall r \in (0,t) \mbox{ or } \forall r\in (t,0) \right\} 
					\subset L_z(x)\cap \Omega .
	\]

When the domain is strictly convex we have that $A_{x,z}$ coincides with $L_z(x)\cap \Omega$ but this is not necessarily the case
for non-convex domains.

\section*{Acknowledgments}
We want to thank the referee for his/her care in reading the manuscript and for several comments and remarks that helped
us to improve the presentation of the results.

We want to warmly thank E. Topp for several interesting discussions. 

L.D.P. and J.D.R. partially supported by 
CONICET grant PIP GI No 11220150100036CO
(Argentina), PICT-2018-03183 (Argentina) and UBACyT grant 20020160100155BA (Argentina).

A. Q. was partially supported by Fondecyt Grant No. 1190282 and Programa Basal, CMM. U. de Chile

	\bigskip
	
{\bf On behalf of all authors, the corresponding author states that there is no conflict of interest. }

	{\bf No data associate for the submission.}

\end{document}